\newcolumntype{C}[1]{>{\centering\arraybackslash}m{#1}}
\theoremstyle{definition}
\newtheorem{theorem}{Theorem}[section]
\newtheorem{lemma}[theorem]{Lemma}
\newtheorem{corollary}[theorem]{Corollary}
\newtheorem{construction}[theorem]{Construction}
\newtheorem{conjecture}[theorem]{Conjecture}
\newtheorem{agreement}[theorem]{Agreement}
\newtheorem{proposition}[theorem]{Proposition}
\def\arraystretch{1,5}
\newcommand{\Cay}{\mathrm{Cay}}
\newcommand{\Aut}{\mathrm{Aut}}
\newcommand{\zanka}{K_1^o}
\def \ZZ {\mathbb {Z}}
\def \A {\textrm{\textbf{A}}}
\def \GG {\Gamma}
\newcommand{\modulo}[3]{#1 \equiv #2 \: (\textrm{mod }#3)}
\newcommand{\oznaka}[3]{#3 \in \Delta_{#1}(#2)}
\newcommand\blfootnote[1]{%
  \begingroup

  \renewcommand\thefootnote{}\footnote{#1}%

  \addtocounter{footnote}{-1}%

  \endgroup

}
\begin{document}
%\linenumbers
\begin{center}
\Large{\textbf{On nut graphs with two vertex and three edge orbits}} \\ [+4ex]
Ksenija Rozman{\small$^{a,b,*}$}, Primo\v z \v Sparl{\small$^{a, c, d}$} \\ [+2ex]
{\it \small
$^a$Institute of Mathematics, Physics and Mechanics, Ljubljana, Slovenia\\
$^b$University of Primorska, FAMNIT, Koper, Slovenia\\
$^c$University of Ljubljana, Faculty of Education, Ljubljana, Slovenia\\
$^d$University of Primorska, Institute Andrej Maru\v si\v c, Koper, Slovenia\\
}
\end{center}

%\footnotetext{$^*$-corresponding author\\
\blfootnote{

Email addresses:
ksenija.rozman@pef.uni-lj.si (Ksenija Rozman),
primoz.sparl@pef.uni-lj.si (Primo\v z \v Sparl)

* - corresponding author at: Institute of Mathematics, Physics and Mechanics, Ljubljana, Slovenia
}

\hrule

\begin{abstract}
Nut graphs are graphs whose adjacency matrix is singular with one-dimensional null space spanned by a vector with no zero entries. In a recent paper, Bašić, Fowler and Pisanski proved that the automorphism group of a nut graph has more orbits on the edge set than on the vertex set. They classified all orders for which a vertex-transitive nut graph with precisely two edge orbits exists, and conjectured that a nut graph with two vertex and three edge orbits exists for each non-prime order $n \ge 9$.
Motivated by this conjecture, we introduce a very general construction that provides graphs with the desired symmetry properties, and we determine some sufficient spectral and structural conditions under which they are nut graphs. The construction yields infinite families of examples and confirms the above conjecture for all odd non-prime orders up to~$2\,500$ and for at least $99.8$ percent of all odd non-prime orders up to a million. Finally, we present some additional interesting examples of nut graphs with two vertex and three edge orbits that do not arise from this construction.
\end{abstract}

\hrule

\begin{quotation}

\noindent {\em \small Keywords: nut graph, graph spectra, singularity, automorphism group, orbitals} \\
{\small Mathematics Subject Classification: 05C50, 20B25}

\end{quotation}

\section{Introduction} \label{sec:1}

A nut graph is a graph with singular adjacency matrix and one-dimensional null space spanned by an eigenvector having no zero entries. It is well known and easy to see (see for instance \cite{SG98}, where nut graphs were first described by Sciriha and Gutman) that nut graphs are necessarily connected, have no leafs, and are not bipartite. 

Over the last few years, nut graphs have been studied extensively with the main goal being to classify the orders for which nut graphs with some specific properties exist. For instance, the problem of determining the orders~$n$ for which $d$-regular nut graphs exist, was posed in \cite{GPS23} and studied further in \cite{BDF24, BKS22, FGGPS20}.
Additionally, all the pairs $(n,d)$ for which there exists a $d$-regular circulant nut graph of order~$n$ have been characterized through a series of papers \cite{Damnjanovic24, Damnjanovic23, DS22} (see Section~\ref{sec:2} for the definition of circulants and other notations not defined in the introduction).

Recent research has focused on exploring nut graphs with specific symmetry properties \cite{BFP24}. For instance, it was shown that for each nut graph~$\Gamma$ its automorphism group~$\Aut(\Gamma)$ has more orbits in its action on the edge set than in its action on the vertex set of~$\Gamma$. In particular, edge-transitive nut graphs do not exist \cite[Theorem 2]{BFP24}. Furthermore, it was shown that for each even order $n \ge 8$, there exists a vertex-transitive nut graph with precisely two edge orbits \cite[Theorem 11]{BFP24}, whereas vertex-transitive nut graphs of odd order do not exist (see \cite[Theorem 10]{FGGPS20}). 

With the classification of all possible orders of vertex-transitive nut graphs completed, a natural continuation was to classify possible orders of nut graphs with two vertex orbits. This was done in~\cite{BFP24}. In particular, \cite[Theorem 20]{BFP24} states that there are no nut graphs of prime order with precisely two vertex orbits, while \cite[Theorem 23]{BFP24} states that nut graphs with precisely two vertex orbits exist for all non-prime orders~$n \ge 9$. The examples used to prove \cite[Theorem 23]{BFP24} have different numbers of edge orbits. However, the authors of~\cite{BFP24} conjectured that it suffices to restrict to graphs with three edge orbits. More precisely, they posed the following conjecture.

\begin{conjecture}\cite[Conjecture 24]{BFP24}\label{conjecture}
Let $n \ge 9$ be a non-prime integer. Then there exists a nut graph $\Gamma$ of order~$n$ with two vertex and three edge orbits.
\end{conjecture}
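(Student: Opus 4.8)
The plan is to build, for each non-prime $n \ge 9$, an explicit graph carrying exactly the desired symmetry and then verify the nut condition through a combination of spectral and structural arguments. Since we need two vertex orbits, the natural starting point is a vertex set partitioned into two blocks $V_1$ and $V_2$ that are permuted internally by a group $G$ acting with two orbits. The cleanest way to guarantee this is to take a group $G$ acting transitively on each block (for instance a cyclic group $\ZZ_m$ or a product of such acting on two copies of appropriately sized orbits) and to let $G$ act on the whole graph so that $V_1$ and $V_2$ are its orbits. The edges then fall into $G$-orbitals, and we arrange the adjacencies so that exactly three orbitals are used: one joining $V_1$ to itself (a circulant-type connection set on the first block), one joining $V_2$ to itself, and one joining $V_1$ to $V_2$. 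The first task is therefore to fix the sizes $|V_1|$ and $|V_2|$ as a function of the divisor structure of $n$, and to choose the three connection sets so that $G$ really does produce two vertex orbits and three edge orbits and no accidental extra symmetry collapses these counts.

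Once the combinatorial skeleton is in place, I would compute the adjacency spectrum using the block structure. Writing the adjacency matrix in the block form $\begin{pmatrix} A_1 & B \\ B^{\top} & A_2 \end{pmatrix}$, where $A_1$ and $A_2$ are circulant blocks and $B$ encodes the cross connection, the transitive cyclic action lets me simultaneously block-diagonalize via the discrete Fourier transform. Each Fourier mode $\omega$ contributes a small (typically $2\times 2$) matrix whose determinant is a trigonometric polynomial in $\omega$, and the kernel of $A$ corresponds precisely to those modes for which this small determinant vanishes. The nut condition has two parts: the nullity must be exactly one, and the unique (up to scaling) kernel vector must have all entries nonzero. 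The nullity-one requirement translates into demanding that the small determinant vanish for exactly one Fourier mode and that within that mode the null space is one-dimensional; because kernel vectors come in conjugate pairs, the vanishing mode must be the trivial mode $\omega = 1$ (the all-ones-type mode on each block), which also immediately forces the kernel vector to be constant on each orbit and hence nowhere zero. Thus the structural full-support condition is essentially automatic once we force the kernel into the trivial mode, and the real content is the spectral condition that no nontrivial mode is singular.

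The main obstacle, and the reason the authors only claim the conjecture for a large density of odd non-prime $n$ rather than all of them, is controlling this spectral condition uniformly in $n$. The determinant of the $2\times 2$ Fourier block at mode $\omega$ is a specific algebraic expression in the roots of unity determined by the chosen connection sets, and requiring it to be nonzero for every nontrivial $\omega$ is a statement about the non-vanishing of a family of cyclotomic-type expressions. This can fail for sporadic $n$ whose divisors create unexpected cancellations, so I expect the proof to proceed by exhibiting a flexible parametrized family of connection sets, proving an infinite subfamily works by a clean number-theoretic non-vanishing lemma (isolating why the trivial mode is the only singular one), and then handling the remaining orders by a finite but large computer-assisted verification. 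Concretely, I would first prove the construction yields the correct orbit counts in full generality, then establish the spectral nut condition for an explicit infinite arithmetic family, and finally reduce the conjecture for the stated ranges to a finite check by showing that once $n$ has a divisor of a suitable form the parameters can always be tuned. The delicate step throughout is ruling out nontrivial singular modes, since the all-ones mode on each block handles existence of a nowhere-zero kernel vector almost for free but says nothing about uniqueness.
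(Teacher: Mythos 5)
Your blind proposal reproduces the architecture of the paper remarkably well, and you are right on the most important meta-point: this statement is a conjecture that the paper does \emph{not} fully prove — it is confirmed for all odd non-prime orders up to $2\,500$ and for about $99.8$ percent of odd non-prime orders up to a million, via exactly the three-stage plan you anticipate (a parametrized family with the correct orbit counts, a spectral criterion for the nut property, then number-theoretic tuning plus checks for the stated ranges). The technical route differs in substance, however. The paper does not block-diagonalize via a discrete Fourier transform; its Construction~\ref{cons:merge} builds the two blocks as direct products of \emph{orbital graphs} of arbitrary transitive permutation groups, $\Gamma_1=\mathcal{O}(\Delta_1)\times\mathcal{O}(\Delta_4)$ and $\Gamma_2=\mathcal{O}(\Delta_2)\times\mathcal{O}(\Delta_5)$, joined completely along matching $\ZZ_n$-coordinates ($\Delta_3=\mathcal{I}$), and uniqueness of the kernel (Theorem~\ref{theorem:K}, which is in fact an \emph{if and only if}) is proved by summing local conditions over rows so that the row-sum vectors become eigenvectors of $\A(\mathcal{O}(\Delta_1))\A(\mathcal{O}(\Delta_2))$ at its spectral radius $\kappa_1\kappa_2$, and then applying Perron--Frobenius, with irreducibility characterized combinatorially in Lemma~\ref{lemma:matrix_product} (connectivity of $\mathcal{O}(\Delta_1\cup\Delta_2)$ plus an odd cycle, automatic in odd order). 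This buys generality your DFT picture cannot: the acting groups need not be abelian. Your mode analysis is workable in the abelian case, which actually covers all of the paper's explicit families (cycles, prime-order circulants, complete graphs), but note two points where your sketch is too optimistic. First, in odd order the two blocks necessarily have different sizes ($n_1d_1=n_2d_2$ with $n_1+n_2$ odd, cf.\ Lemma~\ref{lemma:1}), so most Fourier modes do \emph{not} pair into $2\times 2$ coupled blocks: modes nontrivial in the within-row coordinates decouple from the cross-connection entirely, and their non-vanishing is a separate hypothesis — precisely the paper's requirement that $\mathcal{O}(\Delta_4)$ and $\mathcal{O}(\Delta_5)$ be non-singular; forcing the kernel into the trivial mode is therefore not just about the coupled determinants. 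Second, your ``no accidental extra symmetry'' step is resolved in the paper by a cheap structural trick rather than any analysis of $\Aut(\Gamma)$: the two blocks are given distinct valences, so no automorphism can map one block to the other, and Proposition~\ref{prop:o_vo_e} then pins $(o_v,o_e)=(2,3)$. Finally, the residual number theory is not a cyclotomic non-vanishing lemma but a factorization problem — realizing $k_1k_2=mt$ with valences assembled from even divisors of $m_i-1$ and $t_j-1$ (Proposition~\ref{prop:general}, Corollaries~\ref{cor:3n}, \ref{cor:tetra}, \ref{cor:novi}) — and the orders this machinery misses are all products of two distinct primes, which the paper identifies as the key remaining obstacle.
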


In~\cite{BFP24}, this conjecture was confirmed for orders of particular form, including even numbers, multiples of~$3$ and perfect squares, by exhibiting specific families of nut graphs with the desired degree of symmetry \cite[Proposition 15, Proposition 16, Theorem 23]{BFP24}. Moreover, further research was conducted for the remaining orders up to~$300$, yielding some additional examples but still leaving $20$ orders within this range for which no nut graph with two vertex and three edge orbits was known to exist (see \cite[Question 25]{BFP24}). 
%$$ 35,55,77,95,115,119,143,155,161,187,203,209,215,221,235,247,253,287,295,299. $$
The problem thus remained largely open for odd non-prime orders. More importantly, no general construction of nut graphs of odd order with two vertex and three edge orbits was known at that time. This was the main motivation for our investigations. Before proceeding, let us mention that investigation of nut graphs with a prescribed degree of symmetry has recently been explored in other directions, see for instance~\cite{BD25,BF25}. 
%%%%%%%%                 Nut graphs with a prescribed number of vertex and edge orbits
%%%%%%%%                 Nut graphs with a given automorphism group

The aim of this paper is to present a general construction that yields nut graphs with two vertex and three edge orbits. The paper is structured as follows. In Section~\ref{sec:2}, we fix the terminology and gather known results that will be used throughout the paper. In Section~\ref{sec:3}, we first determine some properties of nut graphs {of} odd order admitting a subgroup of automorphisms having two vertex and three edge orbits (see Lemma~\ref{lemma:1}). Building on this, we present a general construction (Construction~\ref{cons:merge}) of graphs and show that under some additional condition the resulting graphs have two vertex and three edge orbits (Proposition~\ref{prop:o_vo_e}). We then provide some sufficient spectral and structural conditions under which the graphs arising from this construction are nut graphs (Theorem~\ref{theorem:K}).

In Section~\ref{sec:4}, we use Theorem~\ref{theorem:K} to provide several infinite families of nut graphs with {the} desired symmetry properties (Corollaries~\ref{cor:3n}, \ref{cor:tetra} and~\ref{cor:novi}). The members of these infinite families confirm Conjecture~\ref{conjecture} for all odd non-prime orders up to~$2\,500$. We also performed an extensive computational search to see how ``powerful" our results are when one considers larger orders. {In Section~\ref{sec:5}, we gather some findings from the obtained data. For instance, the above corollaries alone provide nut graphs with two vertex and three edge orbits for at least $99.8$ percent of all odd non-prime orders up to~$1\,000\,000$. Moreover, the obtained data suggests that the proportion of odd non-prime orders, not taken care of by the above corollaries, would remain at approximately $0.11$~percent even if we went further than~$1\,000\,000.$}
Finally, we mention that Construction~\ref{cons:merge} does not yield all possible nut graphs with two vertex and three edge orbits. In the last part of the paper we present three noteworthy examples of this kind, emphasizing their structural diversity and the potential for novel constructions of nut graphs with two vertex and three edge orbits.

\section{Preliminaries} \label{sec:2}
In this section we fix notation, and review some definitions and results that will be used throughout the paper. All graphs are assumed to be finite. Moreover, they will almost always be simple with the only exception that we will at times also allow loops. To indicate that we allow this possibility we will speak of a (multi)graph instead of a graph in such a case. For a graph $\Gamma=(V,E)$ we denote the neighborhood of $v \in V$ by $\Gamma(v)$ and the edge between adjacent vertices $u,v \in V$ by~$uv$. An ordered pair $(u,v)$ of adjacent vertices is called an~{\em arc} (from $u$ to $v$). For a subset $V' \subseteq V$ we denote the induced subgraph of $\Gamma$ on $V'$ by~$\Gamma[V']$.

For an integer~$n$ we denote by $\ZZ_n$ both the ring of residue classes modulo~$n$ and the cyclic group of order~$n$ (where all computations are performed modulo~$n$).  A {\em circulant} is a Cayley graph of a cyclic group. More precisely, for an integer~$n$ and a subset $S \subset \ZZ_n$ with $S=-S$ and $0 \notin S$, the circulant $\Cay(\ZZ_n;S)$ is the graph with vertex set~$\ZZ_n$ in which $i,j$ are adjacent if and only if $j-i \in S$. Note that the {\em cycle} $C_n$ and the {\em complete graph} $K_n$ of order~$n$ are both circulants.

The {\em spectrum} of a graph is the (multi)set of its {\em eigenvalues}, that is the eigenvalues of its adjacency matrix. The graph is {\em singular} whenever $0$ is its eigenvalue. It is well known that the spectrum of the complete graph $K_n$ is $\{ n-1, (-1)^{n-1} \}$ and that the spectrum of the cycle $C_n$, $n \ge 3$, is $\{ 2\cos\frac{2\pi a}{n} \colon a \in \ZZ_n \}$. More generally, the spectrum of a circulant $\Cay(\ZZ_n;S)$ is the set of all sums $\sum_{s \in S}\chi(s)$, where $\chi$ is an irreducible character of~$\ZZ_n$ (see for instance \cite[Lemma 9.2]{Godsil93}).   
For a comprehensive study of graph spectra and singular graphs in particular, see for instance \cite{CDS80}. Let $\Gamma=(V,E)$ be a singular graph and let $\mathbf{x}$ be a corresponding eigenvector (often called a {\em kernel eigenvector}). For each $v \in V$ the entry of~$\mathbf{x}$, corresponding to~$v$, is denoted by~$\mathbf{x}(v)$ and is called the {\em label} of $v$ (with respect to~$\mathbf{x}$). 
Note that for all $v \in V$, the so-called {\em local condition} $\sum_{u \in \Gamma(v)}\mathbf{x}(u)=0,$ must hold for $\mathbf{x}$ to be a kernel eigenvector. In other words, the sum of the labels of all the neighbors of~$v$ must be~$0$.

The automorphism group of a graph~$\Gamma$ is denoted by $\Aut(\Gamma)$.
A graph $\Gamma$ is {\em vertex-/edge-/arc-transitive} if $\Aut(\Gamma)$ acts transitively on the set of vertices/edges/arcs of~$\Gamma$. The number of vertex and edge orbits (of~$\Aut(\Gamma)$) is denoted by $o_v(\Gamma)$ and $o_e(\Gamma)$, respectively. If the graph~$\Gamma$ is clear from the context, we simply denote them by $o_v$ and $o_e$, respectively. In this paper, we focus on graphs with two vertex and three edge orbits, that is on graphs with $(o_v, o_e)=(2,3)$. The following result from~\cite{BFP24} will be useful.

\begin{lemma}\cite[Corollary 22]{BFP24} \label{lemma:odd_order} 
Let $\Gamma$ be a nut graph of odd order~$n$ and let $\mathbf{x}$ be a kernel eigenvector. Then $\mathbf{x}^\alpha= \mathbf{x}$ for all $\alpha \in \Aut(\Gamma)$ and the entries of~$\mathbf{x}$ are constant within a given orbit of~$\Aut(\Gamma)$.
\end{lemma}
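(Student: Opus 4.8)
The plan is to exploit the one-dimensionality of the null space together with a parity obstruction coming from the odd order. Since $\Gamma$ is a nut graph, the null space of its adjacency matrix $A$ is one-dimensional and spanned by the real vector $\mathbf{x}$, which has no zero entry. For an automorphism $\alpha \in \Aut(\Gamma)$, let $P$ denote the associated permutation matrix; as $\alpha$ preserves adjacency, $PA = AP$. Hence $A(\mathbf{x}^\alpha) = A P \mathbf{x} = P A \mathbf{x} = 0$, so $\mathbf{x}^\alpha$ again lies in the null space of $A$. Because this space is one-dimensional, there is a scalar $\lambda$ with $\mathbf{x}^\alpha = \lambda \mathbf{x}$.

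Next I would pin down $\lambda$. The permutation matrix $P$ is orthogonal, so $\|\mathbf{x}^\alpha\| = \|\mathbf{x}\|$, which forces $|\lambda| = 1$. Moreover, $\mathbf{x}^\alpha$ is merely a reordering of the real, nowhere-zero vector $\mathbf{x}$, so it too is real; taking the ratio of corresponding entries (with nonzero denominator) shows $\lambda \in \RR$. Combining these two facts gives $\lambda \in \{1, -1\}$.

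The crux is to exclude $\lambda = -1$ using the oddness of $n$; this is the step I expect to be the main obstacle, as everything before it is a routine consequence of the null space being one-dimensional. Suppose $\mathbf{x}^\alpha = -\mathbf{x}$, which unwinds to $\mathbf{x}(\alpha(v)) = -\mathbf{x}(v)$ for every vertex $v$. Following a cycle $v, \alpha(v), \dots, \alpha^{k-1}(v)$ of length $k$ in the disjoint cycle decomposition of $\alpha$ and applying this relation $k$ times yields $\mathbf{x}(v) = \mathbf{x}(\alpha^k(v)) = (-1)^k \mathbf{x}(v)$; since $\mathbf{x}(v) \neq 0$, every cycle length $k$ must be even. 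But then $n$, being the sum of the cycle lengths of $\alpha$, would be even, contradicting that $n$ is odd. Hence $\lambda = 1$, that is $\mathbf{x}^\alpha = \mathbf{x}$ for all $\alpha \in \Aut(\Gamma)$. Finally, $\mathbf{x}^\alpha = \mathbf{x}$ is precisely the statement $\mathbf{x}(\alpha(v)) = \mathbf{x}(v)$ for all $v$ and all $\alpha$, so the label of any vertex equals the label of every vertex in its $\Aut(\Gamma)$-orbit, giving the second assertion.
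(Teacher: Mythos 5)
Your proof is correct. One point of context: the paper does not prove this lemma at all --- it is imported with a citation from \cite[Corollary 22]{BFP24} --- so there is no in-paper argument to compare against, and your self-contained proof is a genuine addition. Your route is the standard one: commutation $PA=AP$ plus one-dimensionality of the null space gives $\mathbf{x}^\alpha=\lambda\mathbf{x}$, orthogonality of $P$ (or finiteness of the order of $\alpha$) together with realness forces $\lambda\in\{1,-1\}$, and a parity obstruction from the odd order $n$ excludes $\lambda=-1$. The only step where you deviate from the usual published argument is that exclusion: the customary proof notes that the product $\prod_{v}\mathbf{x}(v)$ is nonzero and invariant under permuting the entries, whereas $\mathbf{x}^\alpha=-\mathbf{x}$ would multiply it by $(-1)^n=-1$; your cycle-decomposition version (each cycle of $\alpha$ must have even length since $\mathbf{x}(v)=(-1)^k\mathbf{x}(v)$ and $\mathbf{x}(v)\neq 0$, so $n$, as a sum of even cycle lengths, would be even) is an equivalent and equally valid parity argument, and it has the small virtue of localizing the obstruction to individual cycles --- it shows directly that any automorphism negating $\mathbf{x}$ would have to be fixed-point-free with all cycles even. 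The concluding deduction that $\mathbf{x}^\alpha=\mathbf{x}$ for all $\alpha\in\Aut(\Gamma)$ makes the entries constant on orbits is also correct as written.
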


So-called orbital graphs, the definition of which we now review, play a central role in our construction of nut graphs with two vertex and three edge orbits.
Let $G$ be a group acting transitively on a set~$X$. Then $G$ has a natural (component-wise) action on the set $X \times X$ of ordered pairs. The orbits of this action are called {\em orbitals}. Since $G$ is transitive, we always have the {\em diagonal orbital} $\mathcal{I} = \{(x,x) \colon x \in X\}$. Any other orbital is {\em non-diagonal}. For any orbital $\Delta$, $\Delta^*=\{(y,x) \colon (x,y) \in \Delta\}$ is also an orbital. An orbital $\Delta$ is called {\em self-paired} whenever $\Delta=\Delta^*$. For an orbital~$\Delta$ and an element~$i$ of the underlying set~$X$, we let $\Delta(i)=\{j \in X \colon (i,j) \in \Delta\}$. The corresponding {\em orbital (di)graph} $\mathcal{O}(\Delta)$ of $\Delta$ is the directed graph with vertex set~$X$ and arc set $\Delta$. Note that $G$ acts transitively on the vertex set and on the arc set of $\mathcal{O}(\Delta)$. We point out that if $\Delta$ is self-paired, then $\mathcal{O}(\Delta)$ can be considered as an undirected graph. We do so throughout the paper. Note also that in this case $G$ acts transitively on the arc set of (the undirected graph) $\mathcal{O}(\Delta)$. We generalize this concept by defining the orbital (di)graph $\mathcal{O}(\cup_{1 \le i \le m}\Delta_i)$ of a union of $m$ orbitals $\Delta_1, \Delta_2, \ldots, \Delta_m$ of a transitive action of~$G$ on~$X$ to be the directed graph with vertex set~$X$ and arc set $\cup_{1 \le i \le m}\Delta_i$. Throughout this paper, we follow the convention that whenever we have a permutation group~$G \le S_n$, we think of the underlying set being~$\ZZ_n$ instead of~$\{1,2, \ldots, n\}$. Therefore, the elements of corresponding orbitals are ordered pairs~$(i,j)$, where $i,j \in \ZZ_n$.

We end this section with a brief summary of some properties of a well known graph product that will play an important role in our examples. The {\em direct product} $\Gamma_1 \times \Gamma_2$ of (multi)graphs $\Gamma_1=(V_1,E_1)$ and $\Gamma_2=(V_2,E_2)$ is the (multi)graph with vertex set $V_1 \times V_2$ and edge set $\{ (v_1,u_1)(v_2,u_2) \colon v_1v_2 \in E_1, u_1u_2 \in E_2 \}$. It is well known that  $\Gamma_1 \times \Gamma_2$ is connected if and only if $\Gamma_1$ and $\Gamma_2$ are both connected and at most one of them is bipartite. It is also well known (see \cite{CDS80}) that the adjacency matrix of $\Gamma_1 \times \Gamma_2$ is the Kronecker product of the corresponding adjacency matrices. Moreover, if $Spec(\Gamma_1)$ and $Spec(\Gamma_2)$ denote the spectrum of $\Gamma_1$ and $\Gamma_2$, respectively, the spectrum of $\Gamma_1 \times \Gamma_2$ is the (multi)set of all products $\lambda\mu$, where $\lambda \in Spec(\Gamma_1)$ and $ \mu \in Spec(\Gamma_2)$.
For ease of reference, we state the following two well-known properties of direct products of (multi)graphs (see \cite{HIK11}), which will be very useful for us, as a proposition.

\begin{proposition}\label{prop:direct_product}
Let $\Gamma_1$ and $\Gamma_2$ be (multi)graphs and let $\Gamma = \Gamma_1 \times \Gamma_2$. 
\begin{itemize}
    \item If $\Gamma_1$ and $\Gamma_2$ are both vertex-/arc-transitive, then $\Gamma$ is also vertex-/arc-transitive.
    \item $\Gamma$ is non-singular if and only if both $\Gamma_1$ and $\Gamma_2$ are non-singular.
\end{itemize}
\end{proposition}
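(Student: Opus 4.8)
The plan is to treat the two assertions separately, relying for the second one on the spectral description of the direct product that has just been recalled.

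For the first item, I would build automorphisms of $\Gamma = \Gamma_1 \times \Gamma_2$ directly from automorphisms of the factors. Given $\alpha \in \Aut(\Gamma_1)$ and $\beta \in \Aut(\Gamma_2)$, I would verify that the coordinate-wise map $(v,u) \mapsto (\alpha(v), \beta(u))$ is an automorphism of $\Gamma$: since $(v_1,u_1)(v_2,u_2)$ is an edge of $\Gamma$ exactly when $v_1v_2 \in E_1$ and $u_1u_2 \in E_2$, and $\alpha$, $\beta$ preserve $E_1$, $E_2$ respectively, this map sends edges to edges and non-edges to non-edges (the same verification works for loops in the multigraph setting). Hence $\Aut(\Gamma_1) \times \Aut(\Gamma_2)$ acts on $\Gamma$ coordinate-wise. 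For vertex-transitivity, given vertices $(v_1,u_1)$ and $(v_2,u_2)$, I would pick $\alpha$ with $\alpha(v_1)=v_2$ and $\beta$ with $\beta(u_1)=u_2$, afforded by vertex-transitivity of the factors, so that $(\alpha,\beta)$ carries one vertex to the other.

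The arc-transitive case uses the same automorphisms together with the observation that an arc of $\Gamma$, i.e.\ an ordered pair of adjacent vertices $\big((v_1,u_1),(v_2,u_2)\big)$, projects to the arc $(v_1,v_2)$ of $\Gamma_1$ and to the arc $(u_1,u_2)$ of $\Gamma_2$, and that this projection is a bijection between arcs of $\Gamma$ and pairs consisting of an arc of $\Gamma_1$ and an arc of $\Gamma_2$. Given two arcs of $\Gamma$, I would use arc-transitivity of $\Gamma_1$ to match their first projections and arc-transitivity of $\Gamma_2$ to match their second projections; the resulting product automorphism then maps one arc of $\Gamma$ to the other.

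For the second item, I would invoke the fact recalled above that the spectrum of $\Gamma_1 \times \Gamma_2$ is the multiset of all products $\lambda\mu$ with $\lambda \in Spec(\Gamma_1)$ and $\mu \in Spec(\Gamma_2)$. Then $0 \in Spec(\Gamma)$ if and only if $\lambda\mu = 0$ for some such pair, which happens precisely when $0 \in Spec(\Gamma_1)$ or $0 \in Spec(\Gamma_2)$; negating this, $\Gamma$ is non-singular if and only if neither factor is singular. Equivalently, since the adjacency matrix of $\Gamma$ is the Kronecker product $A_1 \otimes A_2$, one has $\det(A_1 \otimes A_2) = \det(A_1)^{n_2}\det(A_2)^{n_1}$, where $n_i$ is the order of $\Gamma_i$, and this is nonzero if and only if both $\det(A_i) \neq 0$. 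Both parts are standard, so I do not anticipate a genuine obstacle; the only points requiring slight care are the bijective correspondence between arcs of $\Gamma$ and pairs of arcs of the factors in the arc-transitive case, and confirming that the coordinate-wise maps remain automorphisms when loops are present in the multigraph setting.
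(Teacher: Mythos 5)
Your proof is correct. The paper itself gives no proof of this proposition --- it is stated as a pair of well-known facts with a citation to the literature (the handbook of product graphs) --- and your arguments are exactly the standard ones that citation stands for: coordinate-wise automorphisms $(\alpha,\beta)$ together with the bijection between arcs of $\Gamma$ and pairs of arcs of the factors for the transitivity claims, and the Kronecker-product description of the spectrum (which the paper recalls just before the proposition) for the singularity claim. Both of your points of care --- loops in the multigraph setting and the arc correspondence --- are handled adequately, so nothing is missing.
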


\section{A general construction}\label{sec:3}

In this section we present a general construction of graphs of odd order with at most two vertex and three edge orbits, which, under some additional conditions, result in nut graphs with $(o_v,o_e)=(2,3)$.

We begin by determining some basic properties of a nut graph $\Gamma$ of odd order admitting $G \le \Aut(\Gamma)$ with two vertex and three edge orbits. Let $V_1$ and $V_2$ be the two vertex orbits of~$G$ and denote $\Gamma_i=\Gamma[V_i]$ and $n_i=|V_i|$ for each $i \in \{1,2\}$. Note that $G$ acts vertex-transitively on each of $\Gamma_1$ and $\Gamma_2$. For each $i \in \{1,2\}$ and $v \in V_i$, denote the number of neighbors of~$v$ within~$V_i$ by~$k_i$ (i.e., $k_i$ is the valence of~$\Gamma_i$) and the number of neighbors of~$v$ outside~$V_i$ by~$d_i$.

Let $\mathbf{x}$ be a kernel eigenvector of~$\Gamma$. Since $\Gamma$ is of odd order,  Lemma~\ref{lemma:odd_order} implies that $\mathbf{x}^\alpha=\mathbf{x}$ for all $\alpha \in G$ and that the entries of $\mathbf{x}$ are constant within $V_1$ and within $V_2$. One can then normalize $\mathbf{x}$ so that the entries corresponding to~$V_1$ are equal to~$1$ and hence the entries corresponding to~$V_2$ are equal to some~$\ell \ne 0$. From the local condition at any $v \in V_1$ we obtain $k_1+d_1\ell=0$, while the local condition at any $v \in V_2$ yields $k_2\ell+d_2=0$. Since $\Gamma$ is connected, $d_1, d_2 \ge 1$, forcing $k_1, k_2 \ge 1$. It is now clear that the three orbits of the action of~$G$ on~$E(\Gamma)$ are $E(\Gamma_1)$, $E(\Gamma_2)$ and the set of the edges between~$\Gamma_1$ and~$\Gamma_2$. Therefore, $G$ acts edge-transitively on each of~$\Gamma_1$ and~$\Gamma_2$ (note however, that $\Gamma_1$ and~$\Gamma_2$ need not be connected). Moreover, $\ell=-{k_1}/{d_1}=-{d_2}/{k_2}$, forcing $k_1k_2=d_1d_2$. 
Observe also that $n_1d_1=n_2d_2$ (simply double count the edges between $\Gamma_1$ and $\Gamma_2$). Furthermore, if $\gcd(n_1, n_2)=1$, then since $n_1d_1=n_2d_2$ and $d_1\le n_2$, $d_2 \le n_1$, it follows that  $d_1=n_2$ and $d_2=n_1$. Then $n_1n_2=d_2d_1=k_2k_1<n_2n_1$, a contradiction. Hence, $\gcd(n_1,n_2) > 1$. 

Before stating the result that summarizes the findings of the previous paragraph, we introduce notation and terminology regarding the above partition $(V_1, V_2)$ of the vertex set of a graph that will be used throughout the paper. Let $\Gamma$ be a simple graph. We say that $\Gamma$ admits a {\em bi-regular bi-decomposition} $(V_1,V_2)$, if its vertex set $V(\Gamma)$ is the disjoint union $V_1 \cup V_2$ and there exist nonnegative integers $k_1$, $k_2$, $d_1$, $d_2$ such that

\begin{itemize} \setlength{\itemsep}{0pt}
\item for each $i \in \{1,2\}$, the induced subgraph $\Gamma_i=\Gamma[V_i]$ is $k_i$-regular,
\noindent
\item for distinct $i,i' \in \{1,2\}$, each vertex in $V_i$ has exactly $d_i$ neighbors in $V_{i'}$.
\end{itemize}

\noindent 
The $6$-tuple $\langle n_1,k_1,d_1,n_2,k_2,d_2 \rangle$, where $n_i=|V_i|$ for each $i \in \{1,2\}$, is called the {\em parameter $6$-tuple} corresponding to this bi-regular bi-decomposition. Whenever the decomposition $(V_1, V_2)$ will be clear from the context, we will simply speak of the parameter $6$-tuple of the graph in question.

\begin{lemma} \label{lemma:1}
Let $\Gamma$ be a nut graph of odd order admitting $G \le \Aut(\Gamma)$ that has two vertex and three edge orbits and let $V_1$ and $V_2$ be the two vertex orbits of~$G$. Then $(V_1, V_2)$ is a bi-regular bi-decomposition of~$\Gamma$. Let $\langle n_1,k_1,d_1,n_2,k_2,d_2 \rangle$ be the corresponding parameter $6$-tuple. Set $\Gamma_1=\Gamma[V_1]$ and $\Gamma_2=\Gamma[V_2]$, and let $\mathbf{x}$ be a kernel eigenvector of~$\Gamma$. Then the following hold. 
\begin{enumerate}[label=(\alph*), ref=\thelemma(\alph*)]
\item \label{item:eigenvector} For each $i \in \{1,2\}$, the entries of $\mathbf{x}$ corresponding to the vertices of $\Gamma_i$ are all equal. Moreover, we can assume that $\mathbf{x}(v)=1$ for all $v \in V_1$, and $\mathbf{x}(v)=-k_1/d_1$ for all $v \in V_2$.
\item \label{item:valence_condition} $k_1k_2 = d_1d_2$, and $k_1, k_2, d_1, d_2 \ge 1,$
\item \label{item:gcd_condition} $\gcd(n_1, n_2) \ne 1$.
\end{enumerate}
\end{lemma}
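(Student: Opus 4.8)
The plan is to read off the parameters $k_i,d_i$ directly from the $G$-action and then exploit the local condition satisfied by a kernel eigenvector, exactly as one does for any symmetric singular graph. First I would note that, since $V_1$ and $V_2$ are the vertex orbits of $G$, every $\alpha\in G$ fixes the partition $\{V_1,V_2\}$ setwise, and $G$ is transitive on each $V_i$. If $\alpha\in G$ maps $v\in V_i$ to $v'\in V_i$, then $\alpha$ maps the neighbors of $v$ in $V_i$ bijectively to those of $v'$ in $V_i$, and likewise for the neighbors outside $V_i$; hence these two counts are independent of the chosen vertex of $V_i$. Denoting them by $k_i$ and $d_i$ shows at once that $(V_1,V_2)$ is a bi-regular bi-decomposition with parameter $6$-tuple $\langle n_1,k_1,d_1,n_2,k_2,d_2\rangle$.

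For (a) I would apply Lemma~\ref{lemma:odd_order}: as $\Gamma$ has odd order, a kernel eigenvector $\mathbf{x}$ is constant on each $G$-orbit, in particular on $V_1$ and on $V_2$. Rescaling, I may assume $\mathbf{x}(v)=1$ on $V_1$ and $\mathbf{x}(v)=\ell$ on $V_2$, and since $\Gamma$ is a nut graph its kernel eigenvector has no zero entry, so $\ell\ne 0$. Evaluating the local condition $\sum_{u\in\Gamma(v)}\mathbf{x}(u)=0$ at a vertex $v\in V_1$ (which has $k_1$ neighbors labeled $1$ and $d_1$ neighbors labeled $\ell$) gives $k_1+d_1\ell=0$, and at a vertex $v\in V_2$ it gives $k_2\ell+d_2=0$. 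Connectedness of $\Gamma$ forces at least one edge between $V_1$ and $V_2$, so $d_1\ge 1$, and double counting the cross-edges via $n_1d_1=n_2d_2$ then gives $d_2\ge 1$; since $\ell\ne 0$, the two relations force $k_1,k_2\ge 1$. Solving the first relation yields $\ell=-k_1/d_1$, which is precisely the label claimed in (a) and also settles the positivity assertions of (b).

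It remains to prove the two identities. Eliminating $\ell$ between $k_1+d_1\ell=0$ and $k_2\ell+d_2=0$ gives $-k_1/d_1=\ell=-d_2/k_2$, whence $k_1k_2=d_1d_2$, which is (b). For (c), the only genuinely arithmetical point, I would argue by contradiction: suppose $\gcd(n_1,n_2)=1$. From $n_1d_1=n_2d_2$ we get $n_2\mid n_1d_1$, so $n_2\mid d_1$, and combined with the obvious bound $1\le d_1\le n_2$ this forces $d_1=n_2$; symmetrically $d_2=n_1$. Then $k_1k_2=d_1d_2=n_1n_2$, contradicting the trivial estimate $k_1k_2\le(n_1-1)(n_2-1)<n_1n_2$. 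Hence $\gcd(n_1,n_2)\ne 1$. I expect this final divisibility-and-bounds step to be the main (albeit modest) obstacle; everything else is a direct reading of the local condition under the $G$-symmetry, and the three-edge-orbit hypothesis is not even needed for (a)--(c).
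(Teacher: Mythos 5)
Your proof is correct and essentially identical to the paper's: it normalizes the kernel eigenvector via Lemma~\ref{lemma:odd_order}, reads off $k_1+d_1\ell=0$ and $k_2\ell+d_2=0$ from the local condition, eliminates $\ell$ to obtain $k_1k_2=d_1d_2$, and derives the coprimality contradiction from the double count $n_1d_1=n_2d_2$ together with $d_1\le n_2$, $d_2\le n_1$ (your explicit steps $n_2\mid d_1$ and $k_1k_2\le(n_1-1)(n_2-1)<n_1n_2$ are left implicit in the paper). Your closing remark that the three-edge-orbit hypothesis is not needed for (a)--(c) is also accurate; the paper invokes it only to identify the three edge orbits, which lies outside the lemma's stated conclusions.
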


Suppose that $\Gamma$ is a graph admitting a bi-regular bi-decomposition with the corresponding parameter $6$-tuple $\langle n_1,k_1,d_1,n_2,k_2,d_2 \rangle$. We say that $\Gamma$ satisfies the {\em valence condition} (with respect to this decomposition) if $k_1k_2=d_1d_2$. 

In the remainder of this section, the objective is to present a general construction yielding graphs with $(o_v,o_e) = (2,3)$ and with a potential of being nut graphs. We first prove (see Lemma~\ref{lemma:singular}) that all graphs admitting a bi-regular bi-decomposition and satisfying the valence condition are singular. Then, using a group theoretic approach, we give a rather general construction (see Construction~\ref{cons:merge}), which is shown to yield graphs admitting a bi-regular bi-decomposition and a subgroup of the automorphism group with two vertex and three edge orbits (see Proposition \ref{prop:o_vo_e}). 
What remains is to determine the conditions under which the null spaces of these graphs are one-dimensional and are spanned by a vector with no zero entries. At the moment, it seems that obtaining such a necessary and sufficient condition for the construction in its full generality is too difficult. This is why in this paper we focus on a specific case of the construction (see Theorem~\ref{theorem:K} and Figure~\ref{pic:construction}). In the next section, we use it to find infinite families of nut graphs with $(o_v,o_e)=(2,3)$.

To avoid having to deal with multiple subscripts we make an agreement that we henceforth denote the two subsets of a bi-regular bi-decomposition by $V$ and $U$ instead of $V_1$ and $V_2$, respectively, (and denote the vertices from $V$ by~$v$ and those from $U$ by~$u$), while still denoting the corresponding parameter $6$-tuple by $\langle n_1,k_1,d_1,n_2,k_2,d_2 \rangle$.

\begin{lemma}\label{lemma:singular}
Let $\Gamma$ be a graph admitting a bi-regular bi-decomposition~$(V,U)$. If $\Gamma$ satisfies the valence condition with respect to~$(V,U)$, then it is singular.
\end{lemma}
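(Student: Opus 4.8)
The plan is to exhibit an explicit nonzero kernel vector that is constant on each of the two parts. Writing $A$ for the adjacency matrix of $\Gamma$ and ordering the vertices so that those of $V$ come first, the bi-regular bi-decomposition gives $A$ the block form $A = \left(\begin{smallmatrix} A_V & B \\ B^{\top} & A_U \end{smallmatrix}\right)$, where $A_V$ and $A_U$ are the adjacency matrices of the $k_1$-regular graph $\Gamma[V]$ and the $k_2$-regular graph $\Gamma[U]$, and $B$ is the $n_1 \times n_2$ biadjacency matrix recording the edges between $V$ and $U$. The regularity conditions translate directly into row-sum statements: denoting by $\mathbf{1}_m$ the all-ones vector of length $m$, we have $A_V \mathbf{1}_{n_1} = k_1 \mathbf{1}_{n_1}$ and $A_U \mathbf{1}_{n_2} = k_2 \mathbf{1}_{n_2}$, while the cross-conditions give $B \mathbf{1}_{n_2} = d_1 \mathbf{1}_{n_1}$ and $B^{\top} \mathbf{1}_{n_1} = d_2 \mathbf{1}_{n_2}$.

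Next I would look for a kernel vector of the form $\mathbf{x} = (\alpha \mathbf{1}_{n_1}, \beta \mathbf{1}_{n_2})^{\top}$, that is, with constant value $\alpha$ on $V$ and $\beta$ on $U$. Applying the block identities above, one computes $A\mathbf{x} = ((\alpha k_1 + \beta d_1)\mathbf{1}_{n_1}, (\alpha d_2 + \beta k_2)\mathbf{1}_{n_2})^{\top}$, so $\mathbf{x}$ lies in the kernel precisely when $(\alpha, \beta)$ solves the homogeneous $2 \times 2$ system with coefficient matrix $\left(\begin{smallmatrix} k_1 & d_1 \\ d_2 & k_2 \end{smallmatrix}\right)$.

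The key observation is then that the determinant of this matrix equals $k_1 k_2 - d_1 d_2$, which vanishes precisely under the valence condition. Hence the system admits a nontrivial solution $(\alpha, \beta) \neq (0,0)$; since both $V$ and $U$ are nonempty, the corresponding $\mathbf{x}$ is a nonzero vector with $A\mathbf{x} = 0$, so $0$ is an eigenvalue of $A$ and $\Gamma$ is singular.

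I do not expect a genuine obstacle here; the only point requiring a little care is that one should argue via the determinant of the $2\times 2$ system rather than setting $\beta = -k_1/d_1$ by hand, since the lemma as stated allows the degenerate possibility $d_1 = 0$ (and symmetrically $d_2 = 0$). The block-matrix formulation sidesteps any division and makes the argument uniform across all admissible parameter $6$-tuples. It also makes transparent why the valence condition is not merely sufficient but is the exact algebraic price for the existence of a kernel vector that is constant on each part.
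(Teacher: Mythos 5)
Your proof is correct and takes essentially the same approach as the paper, which assigns the constant label $1$ on $V$ and $-k_1/d_1 = -d_2/k_2$ on $U$ and verifies the local conditions directly; your block-matrix formulation is just this argument in matrix language. The only (minor) added value is your determinant observation, which avoids the division by $d_1$ and so covers the degenerate case $d_1 = 0$ that the paper's explicit label tacitly excludes, though in all of the paper's applications $k_1, k_2, d_1, d_2 \ge 1$ anyway.
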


\begin{proof}
Let $\langle n_1, k_1, d_1, n_2, k_2, d_2 \rangle$ be the parameter $6$-tuple corresponding to~$(V,U)$. Assign label~$1$ to all the vertices of~$V$ and label~$-k_1/d_1=-d_2/k_2$ to all the vertices of~$U$. Then the local conditions at each $v \in V$ and at each $u \in U$ are clearly met. Therefore, the vector corresponding to this labeling is an eigenvector for the eigenvalue~$0$, implying that $\Gamma$ is singular.
\end{proof}

\begin{construction}  \label{cons:merge}
Let $n > 1$ be an integer and let $G \le S_n$ be a transitive permutation group. Choose two self-paired orbitals $\Delta_1$ and $\Delta_2$ of~$G$, and any orbital $\Delta_3$ of~$G$. Let $m$ and $t$ be positive integers, and let $H \le S_m$ and $K \le S_t$ be transitive permutation groups. Choose a self-paired orbital $\Delta_4$ of~$H$ such that $\Delta_4\ne \mathcal{I}$ if $\Delta_1=\mathcal{I}$, and a self-paired orbital $\Delta_5$ of~$K$ such that $\Delta_5\ne \mathcal{I}$ if $\Delta_2=\mathcal{I}$. Then the graph $\Gamma=\GG(\Delta_1, \Delta_2, \Delta_3, \Delta_4, \Delta_5)$ is the graph of order $n(m+t)$ with vertex set $ V(\Gamma) = V \cup U$, where
\[V = \{ v_i^j \colon i \in \ZZ_n, j \in \ZZ_m\} \quad {\rm and } \quad U = \{ u_i^j \colon i \in \ZZ_n, j \in \ZZ_t\}, \]
 and edge set $E(\Gamma) = E_1 \cup E_2 \cup E_3,$ where
\[ E_1 = \{v_i^jv_{i'}^{j'} \colon (i,i') \in \Delta_1, \; (j,j') \in \Delta_4 \}, \quad
E_2 = \{u_i^ju_{i'}^{j'} \colon (i,i') \in \Delta_2, \; (j,j') \in \Delta_5 \}
\] \[ {\rm and} \quad E_3 = \{ v_i^j u_{i'}^{j'} \colon (i,i') \in \Delta_3, \;  j \in \ZZ_m, j' \in \ZZ_t \}.\] 
\end{construction}

Some remarks regarding Construction~\ref{cons:merge} are in order. 
Clearly, $\Gamma[V]=(V,E_1)$, and so $\Gamma[V]=\mathcal{O}(\Delta_1) \times \mathcal{O}(\Delta_4)$. Similarly, $\Gamma[U]=(U,E_2)$ and $\Gamma[U]=\mathcal{O}(\Delta_2) \times \mathcal{O}(\Delta_5)$. 
Therefore, since at least one of $\Delta_1$ and $\Delta_4$ is non-diagonal and at least one of $\Delta_2$ and $\Delta_5$ is non-diagonal, $\Gamma$ has no loops and is thus simple. Moreover, since for each $i \in \{1,2,4,5\}$, $\Delta_i$ is self-paired, $\Gamma$ is an undirected graph, and $\Gamma[V]$ and $\Gamma[U]$ are both arc-transitive. 
In fact, the group $G \times H \times K$ has a natural action on $\Gamma$ if for each $g \in G$, $h \in H$ and $k \in K$ we define 
\[
(g,h,k)(v_i^j)=v_{g(i)}^{h(j)} \quad \textrm{~and~} \quad (g,h,k)(u_i^j)=u_{g(i)}^{k(j)}.
\]
Moreover, since for each $i \in \{1,2,3\}$, $\Delta_i$ is an orbital of $G$, and $\Delta_4$, $\Delta_5$ are orbitals of $H$ and $K$, respectively, it is clear that each $(g,h,k)$ is an automorphism of $\Gamma$. Consequently, $G \times H \times K \le \Aut(\Gamma)$. Since $G$, $H$ and $K$ are all transitive, $G \times H \times K$ has two vertex orbits (namely $V$ and $U$) and three edge orbits ($E_1$, $E_2$ and $E_3$).
Furthermore, since the edges of $E_1$ have both end-vertices in~$V$, the edges of $E_2$ have both end-vertices in~$U$ and the edges of $E_3$ have one end-vertex in $V$ and one end-vertex in $U$, we thus have the following result.

\begin{proposition} \label{prop:o_vo_e} 
Let $\Gamma=\GG(\Delta_1, \Delta_2, \Delta_3, \Delta_4, \Delta_5)$ and $G$, $H$ and $K$ be as in Construction~\ref{cons:merge}. Then $\Gamma$ admits $G \times H \times K$ as a subgroup of the automorphism group having two vertex orbits and three edge orbits. Moreover, $\Gamma$ is not a graph with $(o_v, o_e) = (2,3)$ if and only if there exists an automorphism of~$\Gamma$, mapping some vertex from $V$ to some vertex from $U$.
\end{proposition}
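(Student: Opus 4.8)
The first assertion of the proposition—that $G \times H \times K \le \Aut(\Gamma)$ has exactly the two vertex orbits $V, U$ and the three edge orbits $E_1, E_2, E_3$—has already been verified in the discussion preceding the statement, so the plan is to take it as given and concentrate on the ``moreover'' part. The guiding observation is that, since $G \times H \times K \le \Aut(\Gamma)$, every orbit of $\Aut(\Gamma)$ (on vertices or on edges) is a union of orbits of this subgroup. Hence $o_v(\Gamma) \in \{1,2\}$ and $o_e(\Gamma) \in \{1,2,3\}$, and consequently the only ways for $\Gamma$ to fail to be a graph with $(o_v,o_e)=(2,3)$ are either $o_v(\Gamma)=1$, or $o_v(\Gamma)=2$ together with $o_e(\Gamma)<3$. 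I would prove the stated equivalence by establishing the two implications separately.

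The ``if'' direction is immediate: if some $\alpha \in \Aut(\Gamma)$ maps a vertex of $V$ to a vertex of $U$, then $V$ and $U$ lie in a common $\Aut(\Gamma)$-orbit, so $o_v(\Gamma)=1 \ne 2$ and $\Gamma$ is therefore not a graph with $(o_v,o_e)=(2,3)$.

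For the ``only if'' direction I would argue contrapositively, assuming that no automorphism sends a vertex of $V$ to a vertex of $U$, and deducing that $(o_v,o_e)=(2,3)$. Under this assumption $V$ and $U$ remain distinct $\Aut(\Gamma)$-orbits, so $o_v(\Gamma)=2$ and every automorphism preserves the partition $\{V,U\}$ setwise. The key point is then that every automorphism also preserves each of the three edge classes $E_1, E_2, E_3$, since these are completely determined by the types of their end-vertices (both in $V$, both in $U$, or one in each), and an automorphism fixing $V$ and $U$ setwise cannot change this type. As each $E_i$ is nonempty and is already a single orbit under $G \times H \times K$, it follows that each $E_i$ is a single $\Aut(\Gamma)$-orbit, whence $o_e(\Gamma)=3$ and $\Gamma$ is indeed a graph with $(o_v,o_e)=(2,3)$.

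The only step requiring genuine care is the claim, in the second direction, that the three classes $E_1, E_2, E_3$ survive as distinct full-group orbits; this rests on two facts that I would record explicitly. First, each $E_i$ is nonempty: each $\Delta_i$ is an orbital and hence nonempty, and the simplicity discussion following Construction~\ref{cons:merge} (the conditions $\Delta_4 \ne \mathcal{I}$ when $\Delta_1 = \mathcal{I}$, and $\Delta_5 \ne \mathcal{I}$ when $\Delta_2 = \mathcal{I}$) guarantees that $E_1$ and $E_2$ consist of genuine non-loop edges, so that no $E_i$ collapses and the count $o_e(\Gamma)=3$ is correct. Second, because $V$ and $U$ are disjoint, the three edge types are mutually exclusive, so an orbit-preserving automorphism cannot merge edges of different types. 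Everything else is routine bookkeeping comparing the orbits of $\Aut(\Gamma)$ with those of its subgroup $G \times H \times K$.
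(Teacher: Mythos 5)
Your proof is correct and takes essentially the same approach as the paper, which establishes this proposition via the remarks preceding its statement: $G \times H \times K \le \Aut(\Gamma)$ has vertex orbits $V$, $U$ and edge orbits $E_1$, $E_2$, $E_3$, and since each $E_i$ is characterized by the location of its end-vertices in $V$ and $U$, the edge classes cannot merge under $\Aut(\Gamma)$ unless some automorphism maps a vertex of $V$ to a vertex of $U$. Your write-up simply makes explicit the bookkeeping the paper leaves implicit (nonemptiness of each $E_i$, and full-group orbits being unions of subgroup orbits), and it is sound.
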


An immediate corollary of this proposition is that $(V,U)$ is a bi-regular bi-decomposition of the graph $\GG(\Delta_1, \Delta_2, \Delta_3, \Delta_4, \Delta_5)$. Bearing this in mind, we establish notation conventions that will be used throughout the rest of the paper.

\begin{agreement}\label{notation}
Whenever we have a graph $\Gamma = \GG(\Delta_1, \Delta_2, \Delta_3, \Delta_4, \Delta_5)$ from Construction~\ref{cons:merge}, we will always consider the corresponding bi-regular bi-decomposition $(V,U)$ and set $\Gamma_1=\Gamma[V]$ and $\Gamma_2=\Gamma[U]$. Moreover, we refer to the corresponding parameter $6$-tuple $\langle n_1, k_1, d_1, n_2, k_2, d_2 \rangle$ as the parameter $6$-tuple of~$\Gamma$. 
Additionally, for each $i \in \{1,2,3\}$ we let $\kappa_i = |\{j \in \ZZ_n \colon (0,j) \in \Delta_i\}|$. Similarly, we let $\kappa_4 = |\{j \in \ZZ_m \colon (0,j) \in \Delta_4\}|$ and $\kappa_5 = |\{j \in \ZZ_t \colon (0,j) \in \Delta_5\}|$.
\end{agreement}

With reference to Agreement~\ref{notation}, note that $n_1=mn$ and $n_2=nt$. Moreover, for each $i \in \{1,2,4,5\}$, $\kappa_i$ is the valence of $\mathcal{O}(\Delta_i)$, and so $k_1=\kappa_1\kappa_4$ and $k_2=\kappa_2\kappa_5$.

Construction~\ref{cons:merge} is a very general construction, yielding graphs with at most two vertex and three edge orbits. However, in what follows we only consider the special case in which $\Delta_3=\mathcal{I}$. Thus, we consider graphs of the form $\Gamma=\GG(\Delta_1, \Delta_2, \mathcal{I}, \Delta_4, \Delta_5)$ (see Figure~\ref{pic:construction} for a schematic representation). In this case, with reference to Agreement~\ref{notation}, $d_1=t$ and $d_2=m$, implying that the parameter $6$-tuple of $\Gamma$ can be expressed as $\langle nm, k_1, t, nt, k_2, m \rangle$. Consequently, $\Gamma$ satisfies the valence condition whenever $k_1k_2=mt$. Recall that in this case,  Lemma~\ref{lemma:singular} implies that $\Gamma$ is singular.
As already mentioned, our focus is on graphs of odd order, and therefore $n \ge 3$ must be an odd integer, and $m$ and $t$ must be of different parity (recall that $\Gamma$ is of order $n(m+t)$).
It is easy to see that $\Gamma$ is connected if and only if $\mathcal{O}(\Delta_1 \cup \Delta_2)$ is connected.  In particular, the connectedness of $\Gamma$ does not depend on the connectedness of the orbital graphs $\mathcal{O}(\Delta_4)$ and $\mathcal{O}(\Delta_5)$ ($\Delta_4$ and $\Delta_5$ can even both be diagonal).

\begin{figure}[h!] 
\centering
\begin{tikzpicture}[scale=1, every node/.style={circle, fill=white, inner sep=0pt}]
	\def\n{5}
	\def\m{3}
	\def\t{4}
	\def\dist{5} %between Gamma1 and Gamma2

%% Vertices of Gamma1
    \foreach \i in {0,...,\numexpr\n-3} {
        \foreach \j in {0,...,\numexpr\m-2} {
            \node[circle, fill=black, inner sep=2.0pt] (v\i\j) at (\j, -\i-1) {};
	      \node[anchor=west, yshift=-7pt] at (v\i\j){$v_{\i}^{\j}$};
        }
    }
    \foreach \i in {0,...,\numexpr\n-3} { %last column
        \node[circle, fill=black, inner sep=2.0pt] (v\i\m-1) at (\m, -\i-1) {};
        \node[anchor=west, yshift=-7pt, xshift=1pt] at (v\i\m-1){$v_{\i}^{m-1}$};
    }
    \foreach \i in {1,...,\numexpr\n-2,\numexpr\n} {
        \node[fill=none] at ({\m-1}, -\i) {\dots};
    }
    
    \foreach \j in {0,...,\numexpr\m-2} { %last row
        \node[circle, fill=black, inner sep=2.0pt] (v\n-1\j) at (\j, -\n) {};
        \node[anchor=west, yshift=-7pt, xshift=1pt] at (v\n-1\j){$v_{n-1}^{\j}$};
    }
    \foreach \j in {0,...,\numexpr\m-2,\numexpr\m} {
        \node[fill=none] at (\j, -\n+1) {$\vdots$};
    }

\node[fill=none] at (\m-1,-\n+1) {$\ddots$};
\node[circle, fill=black, inner sep=2.0pt] (v\n-1\m-1) at (\m, -\n) {};
\node[anchor=west, yshift=-7pt, xshift=1pt] at (v\n-1\m-1){$v_{n-1}^{m-1}$};

    %blocks of Gamma1, the last one is below
    \foreach \i in {1,...,\numexpr\n-2} { 
        \draw[rounded corners, thick, blue!60, dash pattern=on 5pt off 3pt, line width = 0.7pt] 
            (-0.3, -\i+0.3) rectangle (\m+1.1, -\i-0.6); 
            %(starting coordinates) reactangel (ending coordinates)
        \node[anchor=east,text=blue!100] at (-0.5, -\i-0.15) {}; %{$V_{\i}$};
    }
    \draw[rounded corners, thick, blue!60, dash pattern=on 5pt off 3pt, line width = 0.7pt] 
            (-0.3, -\n+0.3) rectangle ( \m+1.1, -\n-0.6);
    \node[anchor=east,text=blue!100] at (-0.5, -\n-0.15) {};%{$V_{n-1}$};

%% Vertices of Gamma2
    \foreach \i in {0,...,\numexpr\n-3} {
        \foreach \j in {0,...,\numexpr\t-2} {
            \node[circle, fill=black, inner sep=2.0pt] (v\i\j) at (\j+\m+\dist, -\i-1) {};
	      \node[anchor=west, yshift=-7pt] at (v\i\j){$u_{\i}^{\j}$};
        }
    }
    \foreach \i in {0,...,\numexpr\n-3} {
        \node[circle, fill=black, inner sep=2.0pt] (v\i\t-1) at (\t+\m+\dist, -\i-1) {};
        \node[anchor=west, yshift=-7pt, xshift=1pt] at (v\i\t-1){$u_{\i}^{t-1}$};
    }
    \foreach \i in {1,...,\numexpr\n-2,\numexpr\n} {
        \node[fill=none] at ({\t+\m+\dist-1}, -\i) {$\ldots$};
    }

    \foreach \j in {0,...,\numexpr\t-2} {
        \node[circle, fill=black, inner sep=2.0pt] (v\n-1\j) at (\j+\m+\dist, -\n) {};
        \node[anchor=west, yshift=-7pt, xshift=1pt] at (v\n-1\j){$u_{n-1}^{\j}$};
    }
    \foreach \j in {0,...,\numexpr\t-2,\numexpr\t} {
        \node[fill=none] at (\j+\m+\dist, -\n+1) {$\vdots$};
    }

\node[fill=none] at (\t+\m+\dist-1,-\n+1) {$\ddots$};
\node[circle, fill=black, inner sep=2.0pt] (v\n-1\m-1) at (\t+\m+\dist, -\n) {};
\node[anchor=west, yshift=-7pt, xshift=1pt] at (v\n-1\m-1){$u_{n-1}^{t-1}$};
    
    %blocks of Gamma2, the last one is below
    \foreach \i in {1,...,\numexpr\n-2} { 
        \draw[rounded corners, thick, magenta!60, dash pattern=on 5pt off 3pt, line width = 0.7pt] 
            (\m+\dist-0.3, -\i+0.3) rectangle (\m+\dist+\t+1.1, -\i-0.6);
        \node[anchor=west,text=magenta!100] at (\m+\dist+\t+1.4, -\i-0.15) {};%{$U_{\i}$};
    }
    \draw[rounded corners, thick, magenta!60, dash pattern=on 5pt off 3pt, line width = 0.7pt] 
            (\m+\dist-0.3, -\n+0.3) rectangle ( \m+\dist+\t+1.1, -\n-0.6);
    \node[anchor=west,text=magenta!100] at (\m+\dist+\t+1.4, -\n-0.15) {};%{$U_{n-1}$};

    \node[fill=none] at ({\m-1}, -0.4) {$\Delta_4$};
    \node[fill=none] at (-0.7, -\n/2-0.5) {$\Delta_1$};
    \node[fill=none] at ({\m+\dist+\t/2+0.5}, -0.4) {$\Delta_5$};
    \node[fill=none] at (2*\m+\dist+\t-1.5, -\n/2-0.5) {$\Delta_2$};

%circle around Gamma1
\draw[rounded corners=35, thick, black!50, dash pattern=on 7pt off 3pt, line width = 0.5pt] 
        (-1.1, 0) rectangle ( \m+1.7, -\n-1.0);
\node[fill=none, inner sep=0pt, text width=1cm] at (0.6,-\n-1.7)%(0.5,0.2) 
{\smash {$\Gamma_1=\mathcal{O}(\Delta_1)\times\mathcal{O}(\Delta_4)$}};

%circle around Gamma2
\draw[rounded corners=35, thick, black!50, dash pattern=on 7pt off 3pt, line width = 0.5pt] 
        (\m+\dist-0.9, 0) rectangle ( \m+\dist+\t+2, -\n-1.0);
\node[fill=none, inner sep=0pt, text width=1cm] at (\m+\dist+\t/2-0.8, -\n-1.7) %(\m+\dist+\t/2-1, 0.2) 
{\smash {$\Gamma_2=\mathcal{O}(\Delta_2)\times\mathcal{O}(\Delta_5)$}};

%edges between Gamma1 and Gamma2
    \foreach \i in {1,...,\numexpr\n-2,\n} { %
        \node[fill=none] (v\i\m) at (\m+1.08,-\i-0.13) {};
        \node[fill=none] (v\i\m+\dist) at (\m+\dist-0.28,-\i-0.13) {};
            \draw[ultra thick, black!50] (v\i\m) -- (v\i\m+\dist);
        \node[fill=none] (v\i\m) at (\m+1.08,-\i-0.18) {};
        \node[fill=none] (v\i\m+\dist) at (\m+\dist-0.28,-\i-0.18) {};
            \draw[ultra thick, black!50] (v\i\m) -- (v\i\m+\dist);
    }
    \node[fill=none] at (\m+\dist/2+0.4, -0.8){$\Delta_3=\mathcal{I}$};
\end{tikzpicture}
\vspace{-0.5cm}
\caption{A schematic representation of the graph $\GG(\Delta_1, \Delta_2, \mathcal{I}, \Delta_4, \Delta_5)$ from Construction~\ref{cons:merge}. The thick gray edges connecting the blue dashed rectangles to the corresponding magenta dashed rectangles, are to be understood in the sense that for all $i \in \ZZ_n$, $j_1 \in \ZZ_m$ and $j_2 \in \ZZ_t$, the vertices $v_i^{j_1}$ and $u_i^{j_2}$ are adjacent.}
\label{pic:construction} 
\end{figure}
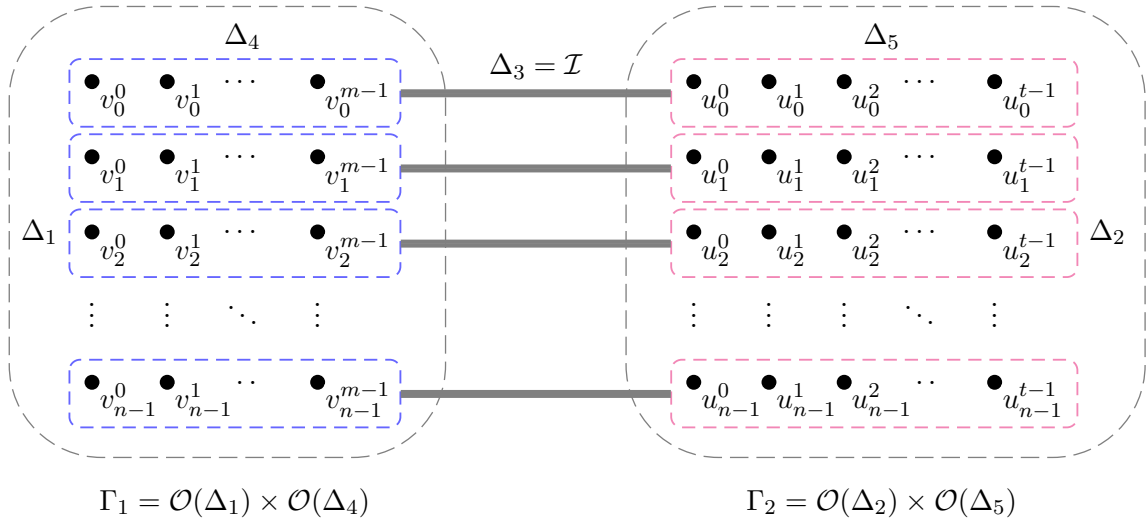

The following lemma will be very useful in our proof of Theorem~\ref{theorem:K}. Before stating it, we review some further well-known notions that will be used from now on. Let $A$ be a square nonnegative matrix, i.e., a square matrix with no negative entries. Then $A$ is said to be {\em irreducible} if for every $i, j$ there exists an integer $r > 0$ such that $(A^r)_{ij}$ is positive. The \textit{underlying digraph} of $A$ is a graph in which there is an arc connecting $i$ to $j$ if $A_{ij} \neq 0$. Clearly, $A$ is irreducible if and only if its underlying digraph is strongly connected. Suppose that the sum of entries in each row of~$A$ is equal to~$\lambda$. It is then easy to see that $\lambda$ is an eigenvalue of~$A$ and that it has the largest absolute value among all eigenvalues of~$A$ (that is, $\lambda$ is the {{\em spectral radius}} of~$A$). Moreover, one of the most fundamental results on the eigenvalues and eigenvectors of nonnegative matrices, known as the Perron-Frobenious theorem, states that if $A$ is {irreducible,} then $\lambda$ has multiplicity~$1$ (for the complete theorem and its proof, see \cite[Chapter 8.3]{BC08} or \cite[Subchapter 8.8]{GR01}). Before finally stating the lemma, we make an agreement that for a digraph~$\Gamma$ its adjacency matrix is denoted by$~\mathbf{\A}(\Gamma)$.

\begin{lemma} \label{lemma:matrix_product}
Let $\Delta_1$ and $\Delta_2$ be two self-paired orbitals of a transitive permutation group~$G$ of degree $n \ge 3$, and let $\Gamma=\mathcal{O}(\Delta_1 \cup \Delta_2)$ be the orbital graph of their union. Set $M_1=\A(\mathcal{O}(\Delta_1))$ and $M_2=\A(\mathcal{O}(\Delta_2))$. Then $M_1M_2$ is irreducible if and only if $\Gamma$ is connected and has an odd cycle.
\end{lemma}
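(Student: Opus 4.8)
The plan is to translate irreducibility of $M_1M_2$ into a strong-connectivity statement for an auxiliary digraph that records \emph{alternating walks}, and then to relate that digraph to a suitable direct product. Write $\kappa_1,\kappa_2$ for the valences of the graphs $\mathcal{O}(\Delta_1)$ and $\mathcal{O}(\Delta_2)$ (both undirected, since $\Delta_1,\Delta_2$ are self-paired), so that $M_1,M_2$ are symmetric $0$--$1$ matrices with constant row sums $\kappa_1,\kappa_2\ge 1$. The starting observation is that $\bigl((M_1M_2)^r\bigr)_{ij}$ counts the walks of length $2r$ from $i$ to $j$ whose edges are alternately of type $\Delta_1$ and type $\Delta_2$; hence $M_1M_2$ is irreducible exactly when for every pair $i,j$ there is such an alternating walk from $i$ to $j$.

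First I would introduce the digraph $D$ on the vertex set $\ZZ_n\times\ZZ_2$ with an arc $(i,0)\to(k,1)$ whenever $i$ and $k$ are adjacent in $\mathcal{O}(\Delta_1)$, and an arc $(i,1)\to(k,0)$ whenever $i$ and $k$ are adjacent in $\mathcal{O}(\Delta_2)$. A directed walk in $D$ from $(i,0)$ to $(j,0)$ is precisely an alternating walk of the required shape in $\Gamma$, so $M_1M_2$ is irreducible if and only if in $D$ every vertex of the layer $\ZZ_n\times\{0\}$ reaches every such vertex; using $\kappa_1,\kappa_2\ge 1$ to step once between the layers, this is in turn equivalent to $D$ being strongly connected. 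The key structural fact is that the \emph{underlying undirected graph} of $D$ is the direct product $\Gamma\times K_2$: an unordered pair $\{(i,0),(k,1)\}$ is an edge exactly when $i$ and $k$ are adjacent in $\mathcal{O}(\Delta_1)$ or in $\mathcal{O}(\Delta_2)$, i.e.\ when they are adjacent in $\Gamma$.

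With this dictionary the argument splits into two ingredients. The first uses the connectivity criterion for direct products recalled in Section~\ref{sec:2}: applied to $\Gamma\times K_2$ it says that this product is connected if and only if $\Gamma$ is connected and at most one factor is bipartite, which (as $K_2$ is bipartite) means $\Gamma$ is connected and non-bipartite, i.e.\ connected with an odd cycle. Thus weak connectivity of $D$ is exactly the right-hand side of the lemma. The second, and the step I expect to carry the real weight, is to upgrade weak connectivity of $D$ to strong connectivity; the forward implication of the lemma then comes for free (strong implies weak), and all the work lies in the converse.

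To prove that weak connectivity forces strong connectivity I would argue by contradiction via the condensation of $D$ into strongly connected components. If $D$ is not strongly connected, the condensation is a non-trivial acyclic digraph, hence has a \emph{source} component $S$ receiving no arcs from outside. Splitting $S$ into its layers $S_0=S\cap(\ZZ_n\times\{0\})$ and $S_1=S\cap(\ZZ_n\times\{1\})$ and using that each layer-$0$ vertex has out-degree $\kappa_1$ and in-degree $\kappa_2$, while each layer-$1$ vertex has out-degree $\kappa_2$ and in-degree $\kappa_1$, a double count of the arcs incident with $S$ (every in-arc of $S_1$ comes from $S_0$, every in-arc of $S_0$ comes from $S_1$) forces $|S_0|=|S_1|$ and, moreover, that \emph{all} arcs leaving $S_0$ land in $S_1$ and all arcs leaving $S_1$ land in $S_0$. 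Hence no arc leaves $S$; since $S$ is a source no arc enters it either, so $S$ is a weakly isolated set, contradicting weak connectivity unless $S$ is everything. This degree-counting step is the crux: it is exactly where the regularity coming from the orbital structure enters, and it is the analogue, for the present bipartite two-degree digraph, of the familiar statement that a weakly connected balanced digraph is strongly connected. Finally, the degenerate cases $\Delta_1=\mathcal{I}$ or $\Delta_2=\mathcal{I}$ (where one factor is a union of loops) can be handled separately, since then $\Gamma$ carries loops and both sides of the equivalence hold trivially.
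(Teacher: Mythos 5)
Your proof is correct, but it takes a genuinely different route from the paper's. The paper stays on the original $n$ vertices: it takes $\Lambda$ to be the underlying digraph of $M_1M_2$ itself (an arc $i \to j$ whenever some $k$ satisfies $(i,k)\in\Delta_1$ and $(k,j)\in\Delta_2$), reads off connectivity and an odd closed walk directly for the forward direction, and for the converse uses the group action --- $G\le\Aut(\Lambda)$ is transitive, so $\Lambda$ is strongly connected iff its underlying undirected graph is connected --- and then rules out disconnectedness by an explicit component-merging argument that reconstructs a bipartition of $\Gamma$, contradicting the odd cycle. You instead double the vertex set: your alternating-layer digraph $D$ on $\ZZ_n\times\ZZ_2$ encodes the same two-step walks, its underlying undirected graph is exactly the Kronecker cover $\Gamma\times K_2$, and the direct-product connectivity criterion already recalled in Section~\ref{sec:2} converts ``connected with an odd cycle'' into plain connectivity of that cover, so the bipartite obstruction falls out of Weichsel's theorem instead of being rebuilt by hand. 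Your weak-to-strong upgrade is also different in character: where the paper invokes vertex-transitivity of $\Lambda$, you use only regularity of the orbital graphs, via the in/out-degree count on a source component $S$ of the condensation (which does correctly force $|S_0|=|S_1|$ and trap all arcs inside $S$, so that weak connectivity collapses the condensation). This makes your argument marginally more general --- it needs only constant valences $\kappa_1,\kappa_2\ge 1$, not a transitive group acting on the digraph --- at the cost of doubling the vertex set and of a little care in the degenerate cases: when, say, $\Delta_2=\mathcal{I}$, it is not that ``both sides of the equivalence hold trivially'' but rather that $M_1M_2=M_1$ and each side reduces to connectivity of $\mathcal{O}(\Delta_1)$, the odd-cycle condition being absorbed by the loops; in fact your main argument covers these cases too, provided loops are counted as odd cycles and the product connectivity criterion is read for multigraphs, as the paper's Section~\ref{sec:2} permits.
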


\begin{proof}
Recall that since $\Delta_1$ and $\Delta_2$ are self-paired, $\Gamma$ is an undirected graph. Let $\Lambda$ denote the underlying digraph of $M_1M_2$ with vertices denoted by the elements of $\ZZ_n$. Note that there is an arc from $i$ to $j$ in~$\Lambda$ if and only if there exists a $k \in \ZZ_n$ such that $(i,k) \in \Delta_1$ and $(k,j) \in \Delta_2$. 

Assume that $M_1M_2$ is irreducible, i.e., $\Lambda$ is strongly connected. Then $\Gamma$ is clearly connected. Moreover, for any two adjacent vertices $i,j$ of~$\Gamma$, there exists a walk from $i$ to $j$ in~$\Lambda$. We thus have a walk of even length from $i$ to $j$ in $\Gamma$. Together with the edge $ij$ of~$\Gamma$ we obtain a closed walk of odd length, implying that $\Gamma$ has an odd cycle.

For the other direction, observe that $G\le \Aut(\Lambda)$ and recall that $G$ is transitive. It is well known and easy to see that in this case $\Lambda$ is strongly connected if and only if the underlying undirected graph of~$\Lambda$ is connected. Suppose to the contrary, that the underlying graph of~$\Lambda$ is not connected and denote its connected components by $\mathcal{C}_1$, $\mathcal{C}_2$, \ldots, $\mathcal{C}_s$. By connectedness of~$\Gamma$, there exists an edge $ij$ of~$\Gamma$, connecting two connected components of~$\Lambda$. Without loss of generality, assume that $i \in \mathcal{C}_1$ and $j \in \mathcal{C}_2$. We consider the case that $(i,j) \in \Delta_1$ (the case that $(i,j) \in \Delta_2$ is analogous).
For each $k \in \ZZ_n$ such that $(j,k) \in \Delta_2$, $(i,k)$ is an arc of~$(\Lambda)$, and therefore $k \in \mathcal{C}_1$. Let $k \in \ZZ_n$ be such that $(j,k) \in \Delta_2$ (note that such a~$k$ exists since $G$ is transitive). Then for each $k' \in \ZZ_n$ such that $(j,k') \in \Delta_1$, the fact that $\Delta_1$ is self-paired implies that $(k',k)$ is an arc of~$\Lambda$, and thus $k' \in \mathcal{C}_1$. By an inductive argument and connectedness of $\Gamma$, we find that $s=2$. Moreover, $\Gamma$ is bipartite with bi-partition $(\mathcal{C}_1, \mathcal{C}_2)$. Since this contradicts the assumption that $\Gamma$ has an odd cycle, the underlying graph of $\Lambda$ is connected and thus $M_1M_2$ is irreducible.
\end{proof}

\begin{theorem} \label{theorem:K}
With reference to Agreement~\ref{notation}, let $\Gamma=\GG(\Delta_1, \Delta_2, \mathcal{I}, \Delta_4, \Delta_5)$ be a graph of odd order, and assume that $\mathcal{O}(\Delta_1)$ and $\mathcal{O}(\Delta_2)$ are non-singular. Then $\Gamma$ is a nut graph if and only if $\mathcal{O}(\Delta_4)$ and $\mathcal{O}(\Delta_5)$ are non-singular, $\mathcal{O}(\Delta_1 \cup \Delta_2)$ is connected and $\Gamma$ satisfies the valence condition.
\end{theorem}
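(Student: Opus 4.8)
The plan is to analyze the null space of $\A(\Gamma)$ directly through its block form relative to the bipartition $(V,U)$, decomposing it into three mutually independent pieces. Writing $M_1=\A(\mathcal{O}(\Delta_1))$, $M_2=\A(\mathcal{O}(\Delta_2))$, $M_4=\A(\mathcal{O}(\Delta_4))$, $M_5=\A(\mathcal{O}(\Delta_5))$, and using that $\Gamma[V]=\mathcal{O}(\Delta_1)\times\mathcal{O}(\Delta_4)$, $\Gamma[U]=\mathcal{O}(\Delta_2)\times\mathcal{O}(\Delta_5)$, while $\Delta_3=\mathcal{I}$ makes $v_i^{j_1}$ adjacent to $u_i^{j_2}$ for all $j_1,j_2$, the adjacency matrix is
\[
\A(\Gamma)=\begin{pmatrix} M_1\otimes M_4 & I_n\otimes J_{m\times t}\\ I_n\otimes J_{t\times m} & M_2\otimes M_5\end{pmatrix},
\]
where $J_{a\times b}$ denotes the all-ones matrix. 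The key observation is that the off-diagonal block equals $I_n\otimes(\mathbf{1}_m\mathbf{1}_t^{\top})$, so it couples the two sides of the graph only through the components along $\mathbf{1}_m$ and $\mathbf{1}_t$. Since $M_4$ is symmetric with $M_4\mathbf{1}_m=\kappa_4\mathbf{1}_m$ (and $\kappa_4\ge 1$), it preserves both $\langle\mathbf{1}_m\rangle$ and $\mathbf{1}_m^{\perp}$, and likewise for $M_5$. Writing a kernel vector as $(\mathbf{a},\mathbf{b})$ with $\mathbf{a}=\mathbf{p}\otimes\mathbf{1}_m+\mathbf{a}_\perp$ and $\mathbf{b}=\mathbf{q}\otimes\mathbf{1}_t+\mathbf{b}_\perp$ (where $\mathbf{a}_\perp\in\CC^n\otimes\mathbf{1}_m^{\perp}$, $\mathbf{b}_\perp\in\CC^n\otimes\mathbf{1}_t^{\perp}$), the null-space equations split into three decoupled systems,
\[
\kappa_4 M_1\mathbf{p}+t\mathbf{q}=0,\quad m\mathbf{p}+\kappa_5 M_2\mathbf{q}=0,\qquad (M_1\otimes M_4)\mathbf{a}_\perp=0,\qquad (M_2\otimes M_5)\mathbf{b}_\perp=0,
\]
so the nullity of $\Gamma$ is the sum of the dimensions of the solution spaces (A), (B), (C) of these systems.

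Next I would dispose of the two $\perp$-pieces. Because $M_1$ is invertible, $\ker(M_1\otimes M_4)=\CC^n\otimes\ker M_4$, and since $\kappa_4>0$ forces $\ker M_4\subseteq\mathbf{1}_m^{\perp}$, piece (B) has dimension $n\dim\ker M_4$, which is $0$ precisely when $\mathcal{O}(\Delta_4)$ is non-singular; symmetrically (C) vanishes exactly when $\mathcal{O}(\Delta_5)$ is non-singular. For piece (A) I eliminate $\mathbf{q}=-(\kappa_4/t)M_1\mathbf{p}$ to obtain $M_2M_1\mathbf{p}=\tfrac{mt}{\kappa_4\kappa_5}\mathbf{p}$. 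The matrix $M_2M_1$ is nonnegative with all row sums equal to $\kappa_1\kappa_2$ (using $M_1\mathbf{1}_n=\kappa_1\mathbf{1}_n$, $M_2\mathbf{1}_n=\kappa_2\mathbf{1}_n$), hence $\kappa_1\kappa_2$ is its spectral radius; and since $k_1=\kappa_1\kappa_4$, $k_2=\kappa_2\kappa_5$, the valence condition $k_1k_2=mt$ is exactly the statement $\tfrac{mt}{\kappa_4\kappa_5}=\kappa_1\kappa_2$, i.e. piece (A) is the eigenspace for the spectral radius.

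For the \emph{if} direction, connectedness of $\mathcal{O}(\Delta_1\cup\Delta_2)$ together with the odd order supplies an odd cycle: a connected vertex-transitive graph on an odd number of vertices cannot be bipartite (an automorphism carrying one part to the other would force equal parts, hence even order), and $\mathcal{O}(\Delta_1\cup\Delta_2)$ is vertex-transitive with $n$ odd. Lemma~\ref{lemma:matrix_product} then makes $M_1M_2$, and therefore $M_2M_1$, irreducible, so by Perron–Frobenius the eigenvalue $\kappa_1\kappa_2$ is simple; combined with the valence condition this gives $\dim(\mathrm{A})=1$, while $\dim(\mathrm{B})=\dim(\mathrm{C})=0$ from the non-singularity of $\mathcal{O}(\Delta_4),\mathcal{O}(\Delta_5)$. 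The spanning vector is $\mathbf{p}=\mathbf{1}_n$, $\mathbf{q}=-(k_1/d_1)\mathbf{1}_n$, which is the all-nonzero labeling of Lemma~\ref{lemma:singular}, so $\Gamma$ is a nut graph. For the \emph{only if} direction, a nut graph is connected, hence so is $\mathcal{O}(\Delta_1\cup\Delta_2)$; by Lemma~\ref{lemma:odd_order} the kernel eigenvector is constant on the orbits $V$ and $U$, so it lies entirely in piece (A) with $\mathbf{p},\mathbf{q}$ constant, and substituting into the (A)-equations while using that the labels are nonzero yields the valence condition. Finally, nullity $1$ with an all-nonzero spanning vector forces (B) and (C) to vanish, since any nonzero vector there has all-zero entries on $U$ or on $V$; thus $\mathcal{O}(\Delta_4)$ and $\mathcal{O}(\Delta_5)$ are non-singular.

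I expect the main obstacle to be twofold: first, justifying the clean orthogonal decomposition into the three decoupled systems, where one must argue carefully that the diagonal blocks preserve the $\mathbf{1}$- and $\perp$-subspaces and that the three solution spaces are genuinely independent; and second, certifying that the spectral-radius eigenvalue $\kappa_1\kappa_2$ of $M_2M_1$ is simple. The latter is precisely where Lemma~\ref{lemma:matrix_product} and the odd-order-forces-odd-cycle observation are indispensable, because without irreducibility the multiplicity of $\kappa_1\kappa_2$ could exceed one and piece (A) would fail to be one-dimensional.
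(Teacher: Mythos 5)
Your proposal is correct, and while it rests on the same two pillars as the paper's proof --- the eigenvalue equation for $M_2M_1$ at $\kappa_1\kappa_2$, made simple via Lemma~\ref{lemma:matrix_product} and Perron--Frobenius (with the odd order supplying the odd cycle), plus Lemmas~\ref{lemma:odd_order} and~\ref{lemma:singular} for the only-if direction and the final labeling --- the scaffolding around them is genuinely different. The paper works with an arbitrary kernel eigenvector, sums the local conditions over the fibers to extract the vectors $\mathbf{a}$ and $\mathbf{b}$ of fiber sums (these are, up to scaling, your $\mathbf{p}$ and $\mathbf{q}$), pins them down by Perron--Frobenius, and then returns to the full local conditions, which become the inhomogeneous systems $\A(\Gamma_1)\mathbf{x_v}=-\mathbf{1}$ and $\A(\Gamma_2)\mathbf{x_u}=\frac{m}{k_1}\mathbf{1}$; uniqueness there is bought with the non-singularity of the direct products $\Gamma_1=\mathcal{O}(\Delta_1)\times\mathcal{O}(\Delta_4)$ and $\Gamma_2=\mathcal{O}(\Delta_2)\times\mathcal{O}(\Delta_5)$ via Proposition~\ref{prop:direct_product}. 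You instead write $\A(\Gamma)$ in Kronecker block form and orthogonally split the kernel along $\langle\mathbf{1}_m\rangle\oplus\mathbf{1}_m^{\perp}$ and $\langle\mathbf{1}_t\rangle\oplus\mathbf{1}_t^{\perp}$ (valid, since $M_4$, $M_5$ are symmetric with $\mathbf{1}$ as a Perron eigenvector and the off-diagonal block $I_n\otimes\mathbf{1}_m\mathbf{1}_t^{\top}$ annihilates the $\perp$-parts), obtaining the exact nullity formula $\dim(\mathrm{A})+n\dim\ker M_4+n\dim\ker M_5$. This buys you something the paper does not state: a quantitative account of how singularity of $\mathcal{O}(\Delta_4)$ or $\mathcal{O}(\Delta_5)$ inflates the nullity (your pieces (B) and (C) are precisely the zero-extended kernel vectors the paper constructs ad hoc in its only-if direction), and it makes both directions of the equivalence fall out of one computation, bypassing Proposition~\ref{prop:direct_product} and the inhomogeneous-system step entirely --- you only need $M_1$, $M_2$ invertible, not non-singularity of the product graphs. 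The paper's route is more elementary (bare local conditions, no tensor formalism) and reuses machinery already in place. Two small remarks: your vertex-transitivity argument for the odd cycle is fine but heavier than needed --- the paper just uses that $\mathcal{O}(\Delta_1\cup\Delta_2)$ is regular, so a connected bipartite such graph would have equal parts and even order; and in the only-if direction, Lemma~\ref{lemma:odd_order} speaks of orbits of the full group $\Aut(\Gamma)$, so you should note that $V$ and $U$, being orbits of $G\times H\times K\le\Aut(\Gamma)$, each lie inside a single $\Aut(\Gamma)$-orbit, whence the eigenvector is constant on each.
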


\begin{proof}
Suppose first that $\Gamma$ is a nut graph. Since $\Gamma$ is connected, $\mathcal{O}(\Delta_1 \cup \Delta_2)$ is connected as well.
By Proposition~\ref{prop:o_vo_e}, $G \times H \times K$ is a subgroup of $\Aut(\Gamma)$ with two vertex and three edge orbits. Lemma~\ref{lemma:1} thus implies that $\Gamma$ satisfies the valence condition.
To complete the first part of the proof, we need to show that $\mathcal{O}(\Delta_i)$ is non-singular for each $i \in \{4,5\}$. Assume to the contrary, that  $\mathcal{O}(\Delta_4)$ is singular and let $\mathbf{x}$ be a kernel eigenvector of $\mathcal{O}(\Delta_4)$ (the consideration for the case when  $\mathcal{O}(\Delta_5)$ is singular is analogous). Define the labeling $\ell$ of $V(\Gamma)$ as follows. For each  $i \in \ZZ_n$, $j \in \ZZ_m$ and $j' \in \ZZ_t$, set $\ell(v_i^j)=\mathbf{x}(j)$ and $\ell(u_i^{j'})=0$. Observe that since $\mathcal{O}(\Delta_4)$ is regular, a double counting argument shows that $\sum_{j \in \ZZ_m}\mathbf{x}(j) = 0$. It is now easy to verify that the labeling~$\ell$ corresponds to a kernel eigenvector of~$\Gamma$. However, since this kernel eigenvector has some zero entries, $\Gamma$ is not a nut graph. Therefore, $\mathcal{O}(\Delta_4)$ is non-singular.

For the converse, recall that since $\Gamma$ satisfies the valence condition, it is singular by Lemma~\ref{lemma:singular} and Proposition~\ref{prop:o_vo_e}. Let $\mathbf x$ be a corresponding (kernel) eigenvector. For each $i \in \ZZ_n$, set 
\[ 
a_i = \sum_{j \in \ZZ_m}\mathbf{x}(v_i^j), \quad \alpha_i = \sum_{\oznaka{1}{i}{s}} a_s, \quad \textrm{and} \quad b_i = \sum_{j \in \ZZ_t}\mathbf{x}(u_i^j), \quad \beta_i = \sum_{\oznaka{2}{i}{q}}b_q.
\]

Consider the local conditions at vertices $v_i^j$, where $j \in \ZZ_m$, and ``sum" them for all~$j$. Since $\mathcal{O}(\Delta_4)$ has valence~$\kappa_4$, we have that
\[
      \sum_{j\in \ZZ_m}\left( \sum_{\oznaka{1}{i}{s}}\left( \sum_{\oznaka{4}{j}{r}} \mathbf{x}(v_s^r) \right) + b_i \right) = 
      \sum_{\oznaka{1}{i}{s}}\left(\sum_{j \in \ZZ_m}\left( \sum_{\oznaka{4}{j}{r}} \mathbf{x}(v_s^r) \right) \right) + \sum_{j \in \ZZ_m} b_i = 
\]

\[     =  \kappa_4 \sum_{\oznaka{1}{i}{s}}a_s+ mb_i = \kappa_4\alpha_i+mb_i= 0, % \quad \Longrightarrow \quad \alpha_i=-\frac{ mb_i}{\kappa_4}.
\]

\noindent
and so 
\begin{equation}\label{K:cond1}
\alpha_i=-\frac{ mb_i}{\kappa_4}.
\end{equation}
Next, for $\oznaka{1}{i}{s}$, consider the local conditions at vertices $u_s^j$, where $j \in \ZZ_t$, and ``sum" them for all $j$ and~$s$. %$s$ runs through $\mathcal{O}(\Delta)_1(i)$. 
Since $\mathcal{O}(\Delta_5)$ has valence~$\kappa_5$, we have that

\[
     \sum_{\oznaka{1}{i}{s}}\left(\sum_{j \in \ZZ_t}\left(\sum_{\oznaka{2}{s}{q}}
    \left( \sum_{\oznaka{5}{j}{p}} \mathbf{x}(u_q^p)\right) + a_s \right)\right) =
   % \sum_{\oznaka{1}{i}{s}}\left(\sum_{\oznaka{2}{s}{q}}\left(\sum_{j \in \ZZ_t}\left(\sum_{\oznaka{5}{j}{p}} \mathbf{x}(u_q^j)\right) + \sum_{j \in \ZZ_t} a_s \right)\right)
\]

\[
    = \sum_{\oznaka{1}{i}{s}}\left(\sum_{\oznaka{2}{s}{q}} \kappa_5b_q + ta_s\right)
    = \sum_{\oznaka{1}{i}{s}}\left(\sum_{\oznaka{2}{s}{q}}\kappa_5 b_q\right) + t \alpha_i = 0.
\]

\noindent
Applying (\ref{K:cond1}), $k_1=\kappa_1\kappa_4$, $k_2=\kappa_2\kappa_5$ and then $k_1k_2=mt$, this is further equivalent to
\begin{equation} \label{K:cond3}
\begin{aligned}
\sum_{\oznaka{1}{i}{s}}\left(\sum_{\oznaka{2}{s}{q}} b_q\right) - \frac{mt\kappa_1\kappa_2}{k_1k_2} b_i =\sum_{\oznaka{1}{i}{s}}\left(\sum_{\oznaka{2}{s}{q}} b_q\right) - \kappa_1\kappa_2 b_i = 0.
\end{aligned}
\end{equation}
The system of these $n$ equations can be written in a matrix form as
\begin{equation}\label{K:E1}
\A(\mathcal{O}(\Delta_1))\A(\mathcal{O}(\Delta_2))\mathbf{b} = \kappa_1\kappa_2\mathbf{b}.
\end{equation}

Similarly, one can consider local conditions at vertices $u_i^j$, where $j \in \ZZ_t$, and ``sum" them for all~$j$, and the local conditions at vertices $v_q^j$, where $j \in \ZZ_m$ and $\oznaka{2}{i}{q}$, and ``sum" them for all $j$ and~$q$. This way we  obtain
\vspace{-10pt}
\[
\beta_i=-\frac{ta_i}{\kappa_5} \quad {\rm and} \quad 
 \sum_{\oznaka{2}{i}{q}}\left(\sum_{\oznaka{1}{q}{s}}\kappa_4 a_s\right) + m \beta_i = 0,
\]

\noindent
and hence 
\begin{equation} \label{K:E2}
\A(\mathcal{O}(\Delta_2))\A(\mathcal{O}(\Delta_1))\mathbf{a} = \kappa_1\kappa_2\mathbf{a}.
\end{equation}

\noindent
From (\ref{K:E1}) and (\ref{K:E2}) we conclude that $\mathbf{b}$ and $\mathbf{a}$, respectively, are eigenvectors of $\A(\mathcal{O}(\Delta_1))\A(\mathcal{O}(\Delta_2))$ 
and $\A(\mathcal{O}(\Delta_2))\A(\mathcal{O}(\Delta_1))$, respectively, corresponding to the eigenvalue $\kappa_1\kappa_2$. We now prove~that $\kappa_1\kappa_2$ has multiplicity~$1$ (in both cases), implying that $\mathbf{a}$ and $\mathbf{b}$ are unique up to scalar multiplication. To simplify the notation, set $M_1 = \A(\mathcal{O}(\Delta_1))$, $M_2=\A(\mathcal{O}(\Delta_2))$ and $M_1M_2=[m_{i,j}]_{0 \le i,j < n}$. Observe that $m_{i,j} \ge 0$ for all $i,j \in \ZZ_n$. Moreover, the sum of the entries in each row equals~$\kappa_1\kappa_2$ (note that the entry $m_{i,j}$ represents the number of elements $k \in \ZZ_n$ such that $(i,k) \in \Delta_1$ and $(k,j) \in \Delta_2$).
Thus, the eigenvalue $\kappa_1\kappa_2$ is the spectral radius of $M_1M_2$ (and of $M_2M_1$), and one of the corresponding eigenvectors is the all~$1^s$ eigenvector. 

For the uniqueness we apply the Perron-Frobenious theorem. Observe that although $M_1M_2$ is not necessarily a symmetric matrix, it is irreducible if and only if $M_2M_1=(M_1M_2)^T$ is irreducible. It thus suffices to consider only $M_1M_2$. 
% https://www.cs.rhul.ac.uk/books/dbook/main.pdf theorem 1.8.1
Since $\mathcal{O}(\Delta_1 \cup \Delta_2)$ is connected by assumption and contains an odd cycle (recall that $n$ is odd and note that $\mathcal{O}(\Delta_1 \cup \Delta_2)$ is regular), $M_1M_2$ is irreducible by Lemma~\ref{lemma:matrix_product}. Therefore, the Perron-Frobenious theorem implies that~$\kappa_1\kappa_2$ has multiplicity~$1$, implying that $\mathbf{a}$ and $\mathbf{b}$ are uniquely determined up to scalar multiplication. We can thus assume that $\mathbf{b}=(1,1,\ldots,1)$ and $\mathbf{a}=(a,a,\ldots,a)$ for some~$a$. By (\ref{K:cond1}), we have that $\alpha_i=-\frac{m}{\kappa_4}$, and consequently

\[
-\frac{m}{\kappa_4}=\alpha_i=\sum_{\oznaka{1}{i}{s}}a_s=\kappa_1a,
\]
which implies that $-m=\kappa_1\kappa_4a=k_1 a$. Hence, $a=-\frac{m}{k_1} \ne 0$.

Finally, considering the local conditions at all $v \in V$ and at all $u \in U$, we find that
\begin{equation} \label{K:cond20}
\begin{split}
\A(\Gamma_1) \mathbf{x_v} = -\mathbf{1}, \quad &\textrm{where} \quad  \mathbf{x_v}=\left(x(v_0^0), x(v_1^0), \ldots, x(v_{n-1}^{m-1})\right), \\
\A(\Gamma_2) \mathbf{x_u} = -a\mathbf{1} =\frac{m}{k_1}\mathbf{1}, \quad &\textrm{where} \quad  \mathbf{x_u}=\left(x(u_0^0), x(u_1^0), \ldots, x(u_{n-1}^{t-1})\right).
\end{split}
\end{equation}
We thus obtain two non-homogeneous systems of linear equations. Since $\mathcal{O}(\Delta_i)$ is non-singular for each $i \in \{1,2,4,5\}$, Proposition~\ref{prop:direct_product} implies that $\Gamma_1$ and $\Gamma_2$ are also non-singular. Each of the systems in (\ref{K:cond20}) thus has a unique solution. Therefore, the null space of~$\Gamma$ is one-dimensional, and so the proof of Lemma~\ref{lemma:singular} implies that the corresponding eigenvectors have no zero entries. Consequently, $\Gamma$ is a nut graph of order $n(m+t)$.
\end{proof}

\section{Infinite families}\label{sec:4} 

Graphs arising from Construction~\ref{cons:merge} that satisfy the assumptions of Theorem~\ref{theorem:K} to be nut graphs, provide strong evidence in support of Conjecture~\ref{conjecture}. To illustrate the power of this construction, we present a number of infinite classes of examples, each of which yields infinitely many orders for which a nut graph with $(o_v,o_e)=(2,3)$ exists (see Corollary~\ref{cor:3n}, Corollary~\ref{cor:tetra} and Corollary~\ref{cor:novi}). 

To derive these results we consider some special types of examples of graphs arising from Theorem~\ref{theorem:K}. In particular, in Proposition~\ref{prop:general} we focus on examples for which either $\Delta_2=\Delta_1$ or $\Delta_2=\mathcal{I}$. Moreover, we only consider very specific types of (multi)graphs resulting from~$\Delta_4$ and~$\Delta_5$.
Before describing these types, we observe the following. For any transitive permutation group $G \le S_n$ and a corresponding self-paired orbital $\Delta$, the (multi)graph $\Lambda= \mathcal{O}(\Delta)$ is arc-transitive (of course, $G$ might be a proper subgroup of $\Aut(\Lambda)$). Conversely, if $\Lambda$ is an arc-transitive (multi)graph of order~$n$, then $\Aut(\Lambda) \le S_n$ is transitive and there exists a corresponding self-paired orbital~$\Delta$ such that $\Lambda= \mathcal{O}(\Delta)$. 
It thus suffices to choose a non-singular arc-transitive (multi)graph~$\Lambda_1$ (yielding~$G=\Aut(\Lambda_1)$ and~$\Delta_1$), non-singular arc-transitive (multi)graphs~$\Lambda_4$ and~$\Lambda_5$ (yielding~$H$, $\Delta_4$, and $K$, $\Delta_5$, respectively), and to decide whether $\Delta_2=\Delta_1$ or $\Delta_2=\mathcal{I}$. 
Note that if $\Delta_2=\mathcal{I}$, then $\mathcal{O}(\Delta_2)$ consists of $n$ vertices with loops, that is, $\mathcal{O}(\Delta_2) = n\zanka$, where $\zanka$ denotes a one vertex multigraph with a loop. Of course, in the end we still need to make sure that all the assumptions and conditions of Theorem~\ref{theorem:K} hold.

We now describe the above mentioned types of (multi)graphs $\Lambda_4$ and $\Lambda_5$, that we will consider in Proposition~\ref{prop:general}. Each of them will be composed as a direct product of the following arc-transitive (multi)graphs: a multigraph consisting of a number of loops, non-singular arc-transitive circulants of prime orders, and complete graphs.
Before proceeding, we state a result which assures that for each odd prime~$p$ and each even divisor~$d$ of~$p-1$, a non-singular arc-transitive (connected) circulant of order~$p$ and valence~$d$ exists.

\begin{proposition}\label{prop:asdfghjkl}
Let $p$ be an odd prime and $d$ be an even divisor of~$p-1$. Then there exists a non-singular arc-transitive connected circulant of order~$p$ and valence~$d$.
\end{proposition}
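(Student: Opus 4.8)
The plan is to realize the desired graph as a Cayley graph on $\ZZ_p$ whose connection set is a suitable subgroup of the multiplicative group $\ZZ_p^*$, and then to establish non-singularity through a short Galois-theoretic argument about its eigenvalues.

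First I would exploit the fact that $\ZZ_p^*$ is cyclic of even order $p-1$. Since $d \mid p-1$, there is a unique subgroup $H \le \ZZ_p^*$ of order $d$, and because $d$ is even this subgroup has even order, so (being a subgroup of a cyclic group) it must contain the unique involution $-1$ of $\ZZ_p^*$. Hence $H = -H$ and $0 \notin H$, so setting $S = H$ yields a legitimate (undirected, loopless) circulant $\Gamma = \Cay(\ZZ_p; S)$ of valence $|S| = d$. Connectedness is free: as $p$ is prime, any nonzero element generates $\ZZ_p$ additively, so $\langle S \rangle = \ZZ_p$. Arc-transitivity is then also routine: the translations $x \mapsto x+b$ act transitively on vertices, while for each $a \in H$ the map $\mu_a\colon x \mapsto ax$ is an automorphism (since $aS = aH = H = S$) fixing $0$ and acting on the neighborhood $N(0) = H$ by left multiplication. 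As $H$ acts regularly on itself, the stabilizer of $0$ is already transitive on $N(0)$, which together with vertex-transitivity gives arc-transitivity.

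The step I expect to be the main obstacle is non-singularity, and this is where the key idea lies. Using the character formula for the spectrum of a circulant recalled in Section~\ref{sec:2}, the eigenvalues of $\Gamma$ are $\lambda_j = \sum_{s \in S}\zeta^{js}$ for $j \in \ZZ_p$, where $\zeta = e^{2\pi i/p}$. For $j = 0$ we get $\lambda_0 = |S| = d > 0$. For $j \in \ZZ_p^*$, the crucial observation is that the element $\sigma_a$ of $\mathrm{Gal}(\mathbb{Q}(\zeta)/\mathbb{Q}) \cong \ZZ_p^*$ defined by $\sigma_a(\zeta) = \zeta^a$ sends $\lambda_1$ to $\lambda_a$; since a field automorphism carries nonzero elements to nonzero elements, the values $\lambda_a$ with $a \in \ZZ_p^*$ are either all zero or all nonzero. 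They cannot all be zero: the sum of all eigenvalues equals the trace of the adjacency matrix, which is $0$ because $\Gamma$ is loopless, so an all-zero collection would force $\lambda_0 = d = 0$, a contradiction. Hence no eigenvalue vanishes and $\Gamma$ is non-singular. Collecting the established properties -- valence $d$, connectedness, arc-transitivity, and non-singularity -- completes the argument. I would note that this last step proves more than is needed: every circulant of prime order with a nonempty connection set is non-singular, so once the arc-transitive connection set $S = H$ is in place, non-singularity comes essentially for free.
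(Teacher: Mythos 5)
Your proposal is correct, and its construction coincides exactly with the paper's: you take the connection set $S$ to be the unique subgroup of order $d$ of the cyclic group $\ZZ_p^*$, observe that $-1 \in S$ because $d$ is even (so $S=-S$ and $\Cay(\ZZ_p;S)$ is a well-defined simple circulant of valence~$d$), get connectedness from primality of~$p$, and get arc-transitivity from the multiplicative action of $S$ on itself --- the paper simply cites this last fact as well known, whereas you spell out the vertex-stabilizer argument, which is fine. The one genuinely different step is non-singularity. The paper deduces it from a determinant fact, proved via the Leibniz formula (with a pointer to the literature): a $p \times p$ circulant $\{0,1\}$-matrix with $p$ prime is singular if and only if it is the zero or the all-ones matrix. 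You instead argue spectrally: the eigenvalues $\lambda_j = \sum_{s \in S}\zeta^{js}$ with $j \in \ZZ_p^*$ form a single orbit under $\mathrm{Gal}(\mathbb{Q}(\zeta)/\mathbb{Q}) \cong \ZZ_p^*$, hence are all zero or all nonzero, and the all-zero alternative is killed by comparing the trace (zero, by looplessness) with $\lambda_0 = d \neq 0$. Both routes are valid and both in fact yield the same stronger conclusion that every loopless circulant of prime order with nonempty connection set is non-singular, so the choice is one of technique: the paper's argument is purely determinantal and self-contained at the matrix level, while yours stays within the spectral framework already set up in Section~\ref{sec:2} (the character formula for circulant eigenvalues) and makes the underlying prime-order mechanism --- transitivity of the Galois action, equivalently irreducibility of the $p$-th cyclotomic polynomial --- explicit, which arguably explains \emph{why} primality is essential rather than just verifying it.
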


\begin{proof}
It is well known that the group $\ZZ_p^*$ of units of~$\ZZ_p$ is a cyclic group of order~$p-1$. For each divisor~$d$ of~$p-1$ there thus exists a unique (cyclic) subgroup~$S \le \ZZ_p^*$ of order~$d$. Since in our case~$d$ is even, $-1 \in S$ and so $S=-S$. It is well known that in this case the circulant $\Cay(\ZZ_p; S)$ is arc-transitive. 
Using the Leibniz formula it is not difficult to see that for any prime~$p$, a $p \times p$ circulant $\{0,1\}$-matrix is singular if and only if all its entries are equal, i.e., it is the zero or all ones matrix (but see for instance \cite{KS12}). Consequently, $\Cay(\ZZ_p; S)$ is non-singular, arc-transitive, connected and of valence~$d$.
\end{proof}

{We make the convention to denote the circulant $\Cay(\ZZ_p; S)$ of order~$p$ and valence~$d=|S|$ from the proof of Proposition~\ref{prop:asdfghjkl} by $\mathcal{C}_p^d$. Using this convention, each of $\Lambda_4$ and $\Lambda_5$ will be a (multi)graph of the form }
\begin{equation}\label{eq:graf}
    z_0 \zanka \times \mathcal{C}_{z_1}^{d_1} \times \cdots \times \mathcal{C}_{z_r}^{d_r} \times K_{z'_1}\times \cdots \times K_{z'_q},
\end{equation}
for some integers $r,q \ge 0$ and $z_0 \ge 1$, odd primes $z_1, \ldots,z_r$, together with even integers $d_1,\ldots,d_r$ such that $d_i | (z_i-1)$ for all~$i \in \{1,\ldots, r\}$, and integers $z_1', \ldots, z_q'\ge 2$. 

The multigraph $\zanka$ is clearly non-singular, and so Proposition~\ref{prop:direct_product} and Proposition~\ref{prop:asdfghjkl} imply that each of $\Lambda_4$ and $\Lambda_5$ is non-singular and arc-transitive. Note that if $\Lambda_4$ (or $\Lambda_5$) is as in (\ref{eq:graf}), then its order and valence are  

\begin{equation}\label{eq:fact}
z=z_0 \left(\prod_{i=1}^rz_i\right) \left(\prod_{i=1}^qz'_i \right) \quad {\rm and} \quad
\left(\prod_{i=1}^{r}d_i\right) \left(\prod_{i=1}^q(z'_i-1)\right),
\end{equation}
respectively. We make the usual agreement that $\prod_{i=1}^0x_i=1$.

\begin{proposition}\label{prop:general}
Let $m \ge 1$ be an odd integer and $t \ge 2$ be an even integer such that
\begin{equation}\label{eq:P1def}
m= m_0 \left(\prod_{i=1}^{r_1} m_i\right) \left(\prod_{i=1}^{q_1}m_i'\right) \quad {\rm and} \quad t= t_0 \left(\prod_{i=1}^{r_2} t_i\right) \left(\prod_{i=1}^{q_2}t_i'\right),
\end{equation}
where $r_1,q_1,r_2, q_2 \ge 0$, $m_0, t_0 \ge 1$, and where
$m_1, \ldots, m_{r_1}, t_1, \ldots, t_{r_2}$ are odd primes and $m_1', \ldots, m_{q_1}'$, $t_1', \ldots, t_{q_2}' \ge 2$. Suppose that for each~$i$, $1 \le i \le r_1$, and each $j$, $1 \le j \le r_2$, there exists an even divisor $d_i$ of $m_i-1$ and an even divisor $d_j'$ of $t_j-1$, together with an even integer~$a \ge 2$ and a~$\kappa \in \{a,a^2\}$,  such that
\begin{equation}\label{eq:P1val}
    \kappa\left(\prod_{i=1}^{r_1}d_i\right)\left(\prod_{j=1}^{r_2}d_j'\right) \left(\prod_{i=1}^{q_1}(m_i'-1)\right) \left(\prod_{i=1}^{q_2}(t_i'-1)\right) = mt 
\end{equation}
and
\begin{equation}\label{eq:P1(2,3)}
    t+a\cdot \left(\prod_{i=1}^{r_1}d_i\right) \left(\prod_{i=1}^{q_1}(m_i'-1)\right) \ne m+\frac{\kappa}{a}\cdot \left(\prod_{j=1}^{r_2}d_j'\right)\left(\prod_{i=1}^{q_2}(t_i'-1)\right).
\end{equation}
If there exists a non-singular arc-transitive connected graph of odd order~$n \ge 3$ and valence~$a$, then there exists a nut graph with $(o_v,o_e)=(2,3)$ of order $n(m+t)$.
\end{proposition}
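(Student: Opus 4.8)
The plan is to exhibit the desired graph as an instance $\Gamma=\GG(\Delta_1,\Delta_2,\mathcal{I},\Delta_4,\Delta_5)$ of Construction~\ref{cons:merge} and then invoke Theorem~\ref{theorem:K} together with Proposition~\ref{prop:o_vo_e}. I would start from the given non-singular arc-transitive connected graph $\Lambda_1$ of order~$n$ and valence~$a$, set $G=\Aut(\Lambda_1)$, and let $\Delta_1$ be the corresponding self-paired orbital, so that $\kappa_1=a$. For $\Lambda_4$ and $\Lambda_5$ I would take the direct products prescribed by~(\ref{eq:graf}), using the factorizations~(\ref{eq:P1def}) and the chosen divisors $d_i$ and $d_j'$; by Proposition~\ref{prop:asdfghjkl}, Proposition~\ref{prop:direct_product} and the non-singularity of~$\zanka$, these are non-singular arc-transitive (multi)graphs of orders $m$ and $t$, with valences $\kappa_4=\left(\prod_{i=1}^{r_1}d_i\right)\left(\prod_{i=1}^{q_1}(m_i'-1)\right)$ and $\kappa_5=\left(\prod_{j=1}^{r_2}d_j'\right)\left(\prod_{i=1}^{q_2}(t_i'-1)\right)$ by~(\ref{eq:fact}); these yield $H,\Delta_4$ and $K,\Delta_5$. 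Finally I would let the value of~$\kappa$ dictate the choice of~$\Delta_2$: if $\kappa=a^2$, set $\Delta_2=\Delta_1$ (so $\kappa_2=a$); if $\kappa=a$, set $\Delta_2=\mathcal{I}$ (so $\kappa_2=1$). In both cases $\kappa_1\kappa_2=\kappa$ and $\kappa_2=\kappa/a$, and all five orbitals are self-paired with at least one of each relevant pair non-diagonal (since $\Delta_1\ne\mathcal{I}$), so the construction is admissible.

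Next I would verify the hypotheses of Theorem~\ref{theorem:K} one by one. The order $n(m+t)$ is odd because $n$ and $m$ are odd and $t$ is even. The matrices $\mathcal{O}(\Delta_1)$ and $\mathcal{O}(\Delta_2)$ are non-singular: $\mathcal{O}(\Delta_1)=\Lambda_1$ by assumption, while $\mathcal{O}(\Delta_2)$ is either $\Lambda_1$ or the all-loops graph $n\zanka$, whose adjacency matrix is the identity. The graphs $\mathcal{O}(\Delta_4)=\Lambda_4$ and $\mathcal{O}(\Delta_5)=\Lambda_5$ are non-singular by construction, and $\mathcal{O}(\Delta_1\cup\Delta_2)$ is connected because it contains the connected graph $\Lambda_1$ (adjoining loops when $\Delta_2=\mathcal{I}$ does not affect connectivity). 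The valence condition follows from $k_1=\kappa_1\kappa_4=a\kappa_4$ and $k_2=\kappa_2\kappa_5$, whence $k_1k_2=\kappa\,\kappa_4\kappa_5$, which equals $mt$ precisely by~(\ref{eq:P1val}). Theorem~\ref{theorem:K} then gives that $\Gamma$ is a nut graph of order $n(m+t)$.

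It remains to confirm that $(o_v,o_e)=(2,3)$. By Proposition~\ref{prop:o_vo_e} this can fail only if some automorphism maps a vertex of~$V$ into~$U$; since $G\times H\times K$ is transitive on each of $V$ and $U$, such an automorphism would merge the two orbits and make $\Gamma$ vertex-transitive, hence regular. So I would simply show $\Gamma$ is not regular. The vertices of~$V$ have degree $k_1+d_1=a\kappa_4+t$ and those of~$U$ have degree $k_2+d_2=\kappa_2\kappa_5+m=\tfrac{\kappa}{a}\kappa_5+m$; upon substituting the products for $\kappa_4$ and $\kappa_5$, inequality~(\ref{eq:P1(2,3)}) asserts exactly that these two degrees differ. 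Thus $\Gamma$ is not regular, so no automorphism carries $V$ into $U$, and $(o_v,o_e)=(2,3)$.

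The main technical point to watch is the simplicity requirement of Construction~\ref{cons:merge}: when $\kappa=a$ we take $\Delta_2=\mathcal{I}$, and the construction then insists that $\Delta_5\ne\mathcal{I}$, i.e. that $\Lambda_5$ carry a genuine (non-loop) edge. This can fail only in the degenerate case $r_2=q_2=0$, where $\Lambda_5=t\zanka$ is all loops. Since $t$ is even, I would handle this by re-factoring $t=(t/2)\cdot 2$, absorbing a complete-graph factor $K_2$; because $(2-1)=1$, this leaves $\kappa_5$ unchanged and hence preserves both~(\ref{eq:P1val}) and~(\ref{eq:P1(2,3)}), while turning $\Lambda_5$ into a perfect matching, so that $\Delta_5\ne\mathcal{I}$. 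Apart from this bookkeeping, the entire argument reduces to matching the two admissible values $\kappa\in\{a,a^2\}$ to the two choices of~$\Delta_2$ and recognizing that~(\ref{eq:P1val}) is the valence condition while~(\ref{eq:P1(2,3)}) is non-regularity.
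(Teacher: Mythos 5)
Your proposal is correct and follows essentially the same route as the paper's own proof: the same choices $\Lambda_2=\Lambda_1$ or $n\zanka$ according to $\kappa\in\{a^2,a\}$, the same products for $\Lambda_4$ and $\Lambda_5$, verification of Theorem~\ref{theorem:K} via (\ref{eq:P1val}) as the valence condition, and non-regularity via (\ref{eq:P1(2,3)}) through Proposition~\ref{prop:o_vo_e}. Even your fix for the degenerate case $\Delta_2=\Delta_5=\mathcal{I}$ (re-factoring $t$ to absorb a $K_2$ factor, harmless since $2-1=1$) is exactly the paper's substitution of $t_0$ by $t_0/2$ with $q_2=1$, $t_1'=2$.
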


\begin{proof}
Set \[\Lambda_4 = m_0 \zanka \times \mathcal{C}_{m_1}^{d_1} \times \cdots \times \mathcal{C}_{m_{r_1}}^{d_{r_1}} \times K_{m'_1}\times \cdots \times K_{m'_{q_1}} \quad {\rm and} \] 

\[\Lambda_5 = t_0 \zanka \times \mathcal{C}_{t_1}^{d_1'} \times \cdots \times \mathcal{C}_{t_{r_2}}^{d_{r_2}'} \times K_{t'_1}\times \cdots \times K_{t'_{q_2}}.\]

Suppose that $\Lambda_1$ is a non-singular arc-transitive connected graph of odd order~$n \ge 3$ and valence~$a$. If $\kappa = a$, set $\Lambda_2=n\zanka$, while if $\kappa = a^2$, set $\Lambda_2 = \Lambda_1$. 
We claim that the groups $G=\Aut(\Lambda_1)$, $H=\Aut(\Lambda_4)$, $K=\Aut(\Lambda_5)$, and the corresponding orbitals $\Delta_i$, $i \in \{1,2,4,5\}$, satisfy all the assumptions and conditions of Construction~\ref{cons:merge} and Theorem~\ref{theorem:K}. This will imply that the resulting graph $\Gamma= \GG(\Delta_1, \Delta_2, \mathcal{I}, \Delta_4, \Delta_5)$ is a nut graph.

By the remarks preceding this proposition, each of  $\Lambda_i$, $i \in \{4,5\}$, is non-singular, while each of $\Lambda_i$, $i \in \{1,2\}$, is non-singular by assumption. Since $\Lambda_1$ is connected, $\mathcal{O}(\Delta_1 \cup \Delta_2)$ is also connected. Suppose that $\Delta_2=\mathcal{I}$ (which occurs if and only if $\kappa = a$). If in addition $\Delta_5=\mathcal{I}$ (which occurs if and only if $t=t_0$ and $r_2=q_2=0$), then we can change the factorization of~$t$ from~(\ref{eq:P1def}) by substituting $t_0$ by ${t_0}/{2}$, and setting $q_2=1$ and $t_1'=2$. Note that doing this we do not change the conditions (\ref{eq:P1val}) and (\ref{eq:P1(2,3)}). We can thus assume that at least one of $\Delta_2$ and $\Delta_5$ is non-diagonal.

To complete the proof of our claim, we verify that the valence condition from Theorem~\ref{theorem:K} holds. Observe that $\Lambda_1 \times \Lambda_4$ and $\Lambda_2 \times \Lambda_5$ are of valence
\vspace{-5pt}
\begin{equation}\label{eq:P1proof}
k_1 = a\cdot \left(\prod_{i=1}^{r_1}d_i\right) \left(\prod_{i=1}^{q_1}(m_i'-1)\right) \quad {\rm and}  \quad k_2 =\frac{\kappa}{a}\cdot \left(\prod_{j=1}^{r_2}d_j'\right)\left(\prod_{i=1}^{q_2}(t_i'-1)\right),
\end{equation}

respectively. By (\ref{eq:P1val}), our claim is proven, and so $\Gamma$ is indeed a nut graph. 

We finally show that $\Gamma$ is a graph with $(o_v,o_e)=(2,3)$. Note that by Proposition~\ref{prop:o_vo_e}, it suffices to show that $\Gamma$ is not regular. Indeed, $\Gamma$ has $nm$ vertices of valence $t+k_1$ and $nt$ vertices of valence $m+k_2$, where $k_1$ and $k_2$ are as in (\ref{eq:P1proof}). By the assumption (\ref{eq:P1(2,3)}), $t+k_1 \ne m+k_2$, which completes the proof.
\end{proof}

We now present three corollaries of Proposition~\ref{prop:general}, in which we consider different possibilities for the parameter~$a$. These correspond to the situations where the graph~$\Lambda_1$ from the proof of Proposition~\ref{prop:general} is a cycle (Corollary~\ref{cor:3n}), an arc-transitive tetravalent graph (Corollary~\ref{cor:tetra}), and an arc-transitive circulant of prime order (Corollary~\ref{cor:novi}), respectively.
In the first and the second corollary, we also provide several small odd integers~$s$ that can be written in the form $s = m + t$, such that $m$ and $t$ admit factorizations of the form~(\ref{eq:P1def}), satisfying the conditions (\ref{eq:P1val}) and~(\ref{eq:P1(2,3)}) of Proposition~\ref{prop:general} {for appropriate even divisors $d_i \mid (m_i-1)$, $1 \le i \le r_1$, and $d_j'\mid (t_j-1)$, $1 \le j \le r_2$, and an appropriate~$\kappa \in \{a,a^2\}$.} For each such $s$, a corresponding~$m$ and~$t$, appropriate factorizations of~$m$ and~$t$ from~(\ref{eq:P1def}), and choices for $d_i$ and $d_j'$, are given in Table~\ref{table:3n} and Table~\ref{table:tetra}, respectively. {We denote the factorization of an integer~$z$ from~(\ref{eq:fact}), together with the choice for the even divisors $d_i \mid (z_i-1)$, by~$[z]$. Moreover, we represent it as $[z]=z_0 | z_1^{(d_1)}, \ldots, z_r^{(d_r)} | z'_1, \ldots, z'_q$}. The parameters $k_1$ and $k_2$ from the tables are as in the proof of Proposition~\ref{prop:general}. As we will see, the set of the small~$s$ from Corollary~\ref{cor:3n} and Corollary~\ref{cor:tetra} suffices to confirm Conjecture~\ref{conjecture} for all but~$17$ orders up to~$2\,500$. The remaining~$17$ orders are covered by Corollary~\ref{cor:novi}.

\begin{corollary}\label{cor:3n}
Let $n \ge 3$ be an odd integer. Let $m$ and~$t$ be integers admitting factorizations of the form~(\ref{eq:P1def}) such that the conditions (\ref{eq:P1val}) and (\ref{eq:P1(2,3)}) from Proposition~\ref{prop:general} hold {for some $d_i$, $1 \le i \le r_1$, and $d_j'$, $1 \le j \le r_2$, and} for $a=2$ and some $\kappa \in \{2,4\}$. Then there exists a nut graph with $(o_v,o_e)=(2,3)$ of order $n(m+t)$. 
In particular, there exists a nut graph with $(o_v,o_e)=(2,3)$ of order $ns$ for each \[s \in \{3, 5, 7, 11, 13, 29, 31, 37, 41, 47, 53, 59, 83, 101, 103, 109, 127, 131, 137, 139\}.\]

\end{corollary}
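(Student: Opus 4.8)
The plan is to derive both assertions directly from Proposition~\ref{prop:general} specialized to $a=2$. For the general statement, the only hypothesis of that proposition still to be supplied is the existence of a non-singular arc-transitive connected graph of odd order $n \ge 3$ and valence~$2$; the canonical witness is the cycle $C_n$. Indeed, $C_n$ is connected of valence~$2$, and it is arc-transitive since its dihedral automorphism group acts transitively on the arcs. For non-singularity I would invoke the fact recalled in Section~\ref{sec:2} that the spectrum of $C_n$ is $\{2\cos\frac{2\pi a}{n} \colon a \in \ZZ_n\}$, and observe that $2\cos\frac{2\pi a}{n}=0$ would force $4a = n(2k+1)$ for some integer~$k$; since $n$ is odd the right-hand side is odd while the left-hand side is even, a contradiction. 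Hence $C_n$ is non-singular for every odd~$n$, and taking $\Lambda_1 = C_n$ in Proposition~\ref{prop:general} yields a nut graph with $(o_v,o_e)=(2,3)$ of order $n(m+t)$ whenever $m$ and $t$ satisfy (\ref{eq:P1val}) and (\ref{eq:P1(2,3)}) with $a=2$ and some $\kappa \in \{2,4\}$. This establishes the first assertion.

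For the ``in particular'' part, the plan is to exhibit, for each of the twenty listed values of~$s$, an explicit decomposition $s = m+t$ with $m$ odd and $t$ even, together with factorizations of~$m$ and~$t$ of the form (\ref{eq:P1def}), a choice of even divisors $d_i \mid (m_i-1)$ and $d_j' \mid (t_j-1)$, and a value $\kappa \in \{2,4\}$, for which the valence condition (\ref{eq:P1val}) and the non-regularity condition (\ref{eq:P1(2,3)}) both hold. All of this data is collected in Table~\ref{table:3n}. Once the table is in place, each row is checked by a routine finite computation: one confirms that $m+t=s$ and that the stated factorizations have the required shape, evaluates the left-hand side of (\ref{eq:P1val}) to verify it equals $mt$, and finally computes $k_1$ and $k_2$ as in (\ref{eq:P1proof}) and checks that $t+k_1 \ne m+k_2$, which is precisely (\ref{eq:P1(2,3)}). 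The first assertion then delivers the desired nut graph of order $ns$ for each such~$s$.

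I do not expect a deep obstacle: the general statement is essentially immediate once one notices that a cycle is the natural valence-$2$ witness, and most of the remaining work is the bookkeeping needed to populate Table~\ref{table:3n}. The only genuinely non-mechanical point is the search underlying the table, namely finding, for each listed~$s$, a splitting $s=m+t$ for which the Diophantine constraint of (\ref{eq:P1val}) is solvable with $\kappa$ restricted to $\{2,4\}$; this is what forces one to try several factorizations. For instance, for $s=5$ the split $m=3$, $t=2$ fails, since with $d_1=2 \mid (3-1)$ it would require $\kappa=3 \notin \{2,4\}$, whereas $m=1$, $t=2\cdot 2$ (so that $\Lambda_5 = K_2 \times K_2$) works with $\kappa=4$, giving $k_1=k_2=2$ and $t+k_1=6 \ne 3 = m+k_2$. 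Since only finitely many values of~$s$ are claimed, exhibiting one working decomposition per value suffices, and the verification of each row is routine.
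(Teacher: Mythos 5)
Your proposal matches the paper's proof: take $\Lambda_1 = C_n$ (connected, arc-transitive, and non-singular for odd~$n$) in Proposition~\ref{prop:general}, then verify the rows of Table~\ref{table:3n} by routine finite checks of (\ref{eq:P1val}) and (\ref{eq:P1(2,3)}), exactly as the paper does with its $s=109$ sample verification. Your explicit spectral argument for the non-singularity of $C_n$ (that $2\cos\frac{2\pi a}{n}=0$ would force the parity contradiction $4a=n(2k+1)$) and your worked row for $s=5$ are correct and merely make explicit what the paper leaves as a remark.
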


\begin{proof}
Observe that the cycle~$C_n$ is connected, arc-transitive and non-singular (recall that $n$ is odd). The result now follows from Proposition~\ref{prop:general} and Table~\ref{table:3n}. For instance, if $s=109=45+64$, we can choose {$[m]=1|5^{(2)}|9$} and $[t]=1||4,16$. In other words, we can take $m_0=1$, $r_1=1$, $q_1=1$, $m_1=5$, {$d_1=2$,} $m_1'=9$, and $t_0=1$, $r_2=0$, $q_2 = 2$, $t_1'=4$, $t_2'=16$. Choosing $\kappa=4$, it is straightforward to check that the conditions (\ref{eq:P1val}) and (\ref{eq:P1(2,3)}) are satisfied. Proposition~\ref{prop:general} therefore implies that there exists a nut graph with $(o_v,o_e)=(2,3)$ of order~$109n$ for each odd~$n \ge 3$. The other possibilities for~$s$ can be checked analogously.
\end{proof}
%where in all the examples we simply take $d_i=2$, $1 \le i \le r_1$ and $d_j'=2$, $1 \le j \le r_2$. 

\begin{table}[h!]
\centering
\renewcommand{\arraystretch}{1.0}
\begin{adjustbox}{width=0.75\textwidth}
\begin{tabular}{C{1.5cm}|C{1cm}ccC{1cm}|C{2cm}|C{2cm}|cc}
\toprule
order & $s$ & $m$ & $t$ & $\kappa$ & $[m]$ & $[t]$ & $k_1$ & $k_2$ \\
\midrule
%$$ & $$ & $$ & $$ & $||$ & $||$ & $||$ & $$ & $$ \\
$3n$ & $3$ & $1$ & $2$ & $2$ & $1||$ & $1||2$ & $2$ & $1$ \\
$5n$ & $5$ & $1$ & $4$ & $4$ & $1||$ & $4||$ & $2$ & $2$ \\
$7n$ & $7$ & $3$ & $4$ & $2$ & $1||3$ & $1||4$ & $4$ & $3$ \\
$11n$ & $11$ & $3$ & $8$ & $4$ & $1||3$ & $2||4$ & $4$ & $6$ \\
$13n$ & $13$ & $1$ & $12$ & $4$ & $1||$ & $3||4$ & $2$ & $6$ \\
$29n$ & $29$ & $21$ & $8$ & $4$ & $3||7$ & $1||8$ & $12$ & $14$ \\
$31n$ & $31$ & $15$ & $16$ & $4$ & $3||5$ & $1||16$ & $8$ & $30$ \\
$37n$ & $37$ & $21$ & $16$ & $4$ & $1||3,7$ & $2||8$ & $24$ & $14$ \\
$41n$ & $41$ & $9$ & $32$ & $4$ & $1||9$ & $2||4,4$ & $16$ & $18$  \\
$47n$ & $47$ & $15$ & $32$ & $4$ & $1||3,5$ & $2||16$ & $16$ & $30$  \\
$53n$ & $53$ & $5$ & $48$ & $4$ & $1||5$ & $3||16$ & $8$ & $30$ \\
$59n$ & $59$ & $3$ & $56$ & $4$ & $3||$ & $1||7,8$ & $2$ & $84$ \\
$83n$ & $83$ & $3$ & $80$ & $2$ & $1||3$ & $1||5,16$ & $4$ & $60$ \\
$101n$ & $101$ & $5$ & $96$ & $4$ & $1||5$ & $2||3,16$ & $8$ & $60$ \\
$103n$ & $103$ & $55$ & $48$ & $2$ & $1||5,11$ & $1||4,12$ & $80$ & $33$ \\
$109n$ & $109$ & $45$ & $64$ & $4$ & {$1|5^{(2)}|9$} & $1||4,16$ & $32$ & $90$ \\
$127n$ & $127$ & $63$ & $64$ & $4$ & {$1|7^{(2)}|9$} & $1||64$ & $32$ & $126$ \\
$131n$ & $131$ & $35$ & $96$ & $4$ & $1||5,7$ & $2||6,8$ & $48$ & $70$ \\
$137n$ & $137$ & $105$ & $32$ & $2$ & $1||5,21$ & $1||4,8$ & $160$ & $21$ \\
{$139n$} & $139$ & $19$ & $120$ & $4$ & $1|19^{(6)}|$ & $1||6,20$ & $12$ & $190$ \\

\bottomrule
\end{tabular}
\end{adjustbox}
\caption{Appropriate $m$ and $t$ with $[m]$, $[t]$, for all the $s$ from Corollary~\ref{cor:3n}.}
\label{table:3n}
\end{table}

\begin{figure}[h!] %%%%%%% 5n
\centering
\begin{tikzpicture} [scale=0.3]
% Radii of the circles
\def\rzeroa{2cm}  % Main circle
\def\ronea{4.5cm}   % Inner circle (below)
\def\roneb{5.5cm}
\def\ronec{6.5cm}
\def\roned{7.5cm}

\def\n{11}

\def\colorA{black}
\def\colorB{black}

\foreach \i in {0,...,\numexpr\n-1} {
    \def\angle{360/\n * \i}

    % left graph (main circle)
    \node[draw, circle, fill=\colorA, minimum size=4pt, inner sep=1.5pt] (zero \i a) at ({\angle}:\rzeroa) {};  

    % right graph (inner circle)
    \node[draw, circle, fill=\colorB, minimum size=4pt, inner sep=1.5pt] (one \i a) at ({\angle}:\ronea) {};
    \node[draw, circle, fill=\colorB, minimum size=4pt, inner sep=1.5pt] (one \i b) at ({\angle}:\roneb) {};
    \node[draw, circle, fill=\colorB, minimum size=4pt, inner sep=1.5pt] (one \i c) at ({\angle}:\ronec) {};
    \node[draw, circle, fill=\colorB, minimum size=4pt, inner sep=1.5pt] (one \i d) at ({\angle}:\roned) {};
}

\foreach \i in {0,...,\numexpr\n-1} {
    % Compute indices with wrap-around for circular structure
    \pgfmathsetmacro\ipone{int(mod(\i+1, \n))}   % i+1

    % Connect vertices within left graph
        \draw[black, thin] (zero \i a) to (zero \ipone a);

    % Connect vertices within right graph
    \foreach \k in {a,b,c,d} {
        \draw[black, thin] (one \i \k) to (one \ipone \k);
    }
}

\foreach \i in {0,...,\numexpr\n-1} {
    \def\angle{360/\n * \i}
    %blue blocks around Gamma1
    \node[draw=blue!60, semithick, rounded corners=8pt, dash pattern=on 3pt off 2pt, circle, minimum size=9pt] 
        (blueRectangle\i) at ({\angle}: \rzeroa) {}; 
    %magenta blocks around Gamma2
    \node[draw=magenta!60, semithick, rounded corners=8pt, dash pattern=on 3pt off 2pt, rectangle, minimum width=4.5cm, minimum height=2cm, rotate=\angle, transform shape] 
        (magentaRectangle\i) at ({\angle}: 6cm) {}; 
    %edge between the corresponding blocks
    \draw[line width=4pt, gray] (blueRectangle\i) -- (magentaRectangle\i); 
}

\end{tikzpicture}%%%%%% 7n
\begin{tikzpicture} [scale=0.3] 
% Radii of the circles
\def\rzeroa{2cm}  % Main circle
\def\rzerob{3cm}
\def\rzeroc{4cm}
\def\ronea{6.1cm}   % Inner circle (below)
\def\roneb{7.5cm}

\def\n{11}

\def\colorA{black}
\def\colorB{black}

\foreach \i in {0,...,\numexpr\n-1} {
    \def\angle{360/\n * \i}

    % left graph (main circle)
    \node[draw, circle, fill=\colorA, minimum size=4pt, inner sep=1.5pt] (zero \i a) at ({\angle}:\rzeroa) {};
    \node[draw, circle, fill=\colorA, minimum size=4pt, inner sep=1.5pt] (zero \i b) at ({\angle}:\rzerob) {};
    \node[draw, circle, fill=\colorA, minimum size=4pt, inner sep=1.5pt] (zero \i c) at ({\angle}:\rzeroc) {};

    % right graph (K4)
    \node[draw, circle, fill=\colorB, minimum size=4pt, inner sep=1.5pt] (one \i a) at ({\angle - 8}:\ronea) {};
    \node[draw, circle, fill=\colorB, minimum size=4pt, inner sep=1.5pt] (two \i a) at ({\angle + 8}:\ronea) {};
    \node[draw, circle, fill=\colorB, minimum size=4pt, inner sep=1.5pt] (one \i b) at ({\angle - 7}:\roneb) {};
    \node[draw, circle, fill=\colorB, minimum size=4pt, inner sep=1.5pt] (two \i b) at ({\angle + 7}:\roneb) {};
}

\foreach \i in {0,...,\numexpr\n-1} {
    % Compute indices with wrap-around for circular structure
    \pgfmathsetmacro\ipone{int(mod(\i+1, \n))}   % i+1  

    % Connect vertices within left graph
    \foreach \k in {a,b,c} {
        \foreach \l in {a,b,c} {
        \ifx\k\l\else 
        \draw[gray, thin] (zero \i \k) to (zero \ipone \l);
    \fi    
    }
    }

    % Connect vertices within right graph
    \draw[] (one \i a) -- (two \i a);
    \draw[] (one \i a) -- (one \i b);
    \draw[] (one \i a) -- (two \i b);
    \draw[] (two \i a) -- (one \i b);
    \draw[] (two \i a) -- (two \i b);
    \draw[] (one \i b) -- (two \i b);

\foreach \i in {0,...,\numexpr\n-1} {
    \def\angle{360/\n * \i}
    %blue blocks around Gamma1
    \node[draw=blue!60, thin, rounded corners=8pt, dash pattern=on 3pt off 2pt, rectangle, minimum width=3cm, minimum height=1.6cm, rotate=\angle, transform shape] 
        (blueRectangle\i) at ({\angle}: \rzerob) {}; 
    %magenta blocks around Gamma2
    \node[draw=magenta!60, thin, rounded corners=8pt, dash pattern=on 3pt off 2pt, rectangle, minimum width=3cm, minimum height=3cm, rotate=\angle, transform shape] 
        (magentaRectangle\i) at ({\angle}: 6.8cm) {}; 
    %edge between the corresponding blocks
    \draw[line width=4pt, gray] (blueRectangle\i) -- (magentaRectangle\i); 
}

}

\end{tikzpicture}%%%%%%% n^2
\begin{tikzpicture} [scale=0.3] 
% Radii of the circles
\def\rzeroa{3cm}  % Main circle
\def\ronea{5.1cm}   % Inner circle (below)
\def\roneb{5.7cm}
\def\ronec{6.3cm}
\def\roned{6.9cm}
\def\ronee{7.5cm}

\def\n{11}

\def\colorA{black}
\def\colorB{black}

\foreach \i in {0,...,\numexpr\n-1} {
    \def\angle{360/\n * \i}

    % left graph (main circle)
    \node[draw, circle, fill=\colorA, minimum size=4pt, inner sep=1.5pt] (zero \i a) at ({\angle}:\rzeroa) {}; 

    % right graph (K4)
    \node[draw, circle, fill=\colorB, minimum size=3pt, inner sep=1pt] (one \i a) at ({\angle - 8}:\ronea) {};
    \node[draw, circle, fill=\colorB, minimum size=3pt, inner sep=1pt] (two \i a) at ({\angle + 8}:\ronea) {};
    \node[draw, circle, fill=\colorB, minimum size=3pt, inner sep=1pt] (one \i b) at ({\angle - 7}:\roneb) {};
    \node[draw, circle, fill=\colorB, minimum size=3pt, inner sep=1pt] (two \i b) at ({\angle + 7}:\roneb) {};
    \node[draw, circle, fill=\colorB, minimum size=3pt, inner sep=1pt] (one \i c) at ({\angle - 6.2}:\ronec) {};
    \node[draw, circle, fill=\colorB, minimum size=3pt, inner sep=1pt] (two \i c) at ({\angle + 6.2}:\ronec) {};  
    \node[draw, circle, fill=\colorB, minimum size=3pt, inner sep=1pt] (one \i d) at ({\angle - 5.6}:\roned) {};
    \node[draw, circle, fill=\colorB, minimum size=3pt, inner sep=1pt] (two \i d) at ({\angle + 5.6}:\roned) {};    
    \node[draw, circle, fill=\colorB, minimum size=3pt, inner sep=1pt] (one \i e) at ({\angle - 5}:\ronee) {};
    \node[draw, circle, fill=\colorB, minimum size=3pt, inner sep=1pt] (two \i e) at ({\angle + 5}:\ronee) {};
}

\foreach \i in {0,...,\numexpr\n-1} { 
    % Connect vertices within left graph
    \foreach \j in {0,...,\numexpr\n-1} {
        \ifx\i\j\else 
        \draw[gray, thin] (zero \i a) to (zero \j a);
    \fi    
    }
    % Connect vertices within right graph
    \foreach \k in {a,b,c,d,e}{
        \draw[] (one \i \k) -- (two \i \k);
    }
}

\foreach \i in {0,...,\numexpr\n-1} {
    \def\angle{360/\n * \i}
    %blue blocks around Gamma1 (transform shape - adjust with scale in tikzpicture)
    \node[draw=blue!60, semithick, rounded corners=8pt, dash pattern=on 3pt off 2pt, circle, minimum size=10pt] 
        (blueRectangle\i) at ({\angle}: \rzeroa) {}; 
    %magenta blocks around Gamma2
    \node[draw=magenta!60, semithick, rounded corners=8pt, dash pattern=on 3pt off 2pt, rectangle, minimum width=3.5cm, minimum height=2.7cm, rotate=\angle, transform shape] 
        (magentaRectangle\i) at ({\angle}: \ronec) {}; 
    %an edge between the corresponding blocks
    \draw[line width=4pt, gray] (blueRectangle\i) -- (magentaRectangle\i); 
}

\end{tikzpicture}
\caption{Examples of nut graphs with $(o_v,o_e)=(2,3)$ from Construction~\ref{cons:merge} of orders $5n$, $7n$ and $n^2$ for $n=11$. {The first and the second graph arise from Corollary~\ref{cor:3n} (for $s=5$ and $s=7$, respectively).}}
\label{pic:3n} 
\end{figure}
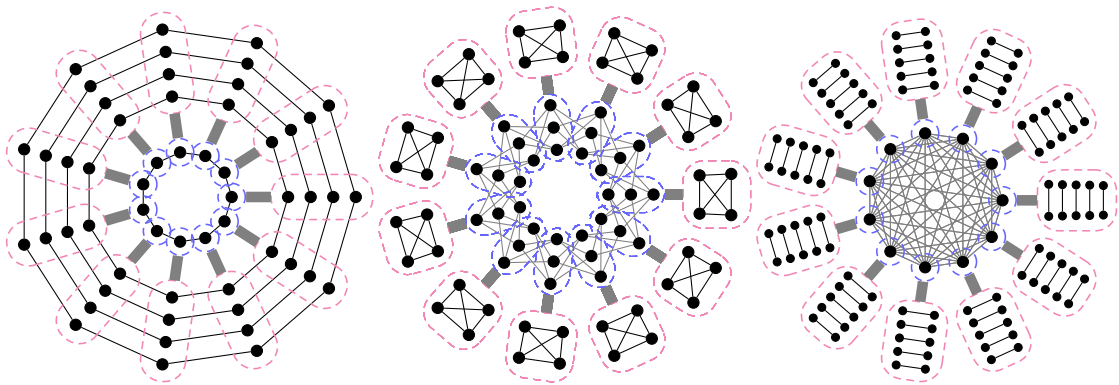

\begin{corollary}\label{cor:tetra}
Let $n \ge 5$ be an odd integer such that either $\modulo{n}{1}{4}$ or $n$ is not a prime. Let $m$ and~$t$ be integers admitting factorizations of the form~(\ref{eq:P1def}) such that the conditions (\ref{eq:P1val}) and (\ref{eq:P1(2,3)}) from Proposition~\ref{prop:general} hold {for some $d_i$, $1 \le i \le r_1$, and $d_j'$, $1 \le j \le r_2$, and} for $a=4$ and some $\kappa \in \{4,16\}$. Then there exists a nut graph with $(o_v,o_e)=(2,3)$ of order $n(m+t)$. 
In particular, there exists a nut graph with $(o_v,o_e)=(2,3)$ of order $ns$ for each $s \in \{17, 19, 23, 73, 79, 97, 107, 113\}$.
\end{corollary}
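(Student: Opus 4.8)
The plan is to follow the proof of Corollary~\ref{cor:3n} almost verbatim, replacing $a=2$ by $a=4$. As in that proof, everything reduces to exhibiting, for every admissible~$n$, a non-singular arc-transitive connected graph~$\Lambda_1$ of odd order~$n$ and valence~$a=4$; once such a~$\Lambda_1$ is available, Proposition~\ref{prop:general} (applied with $a=4$ and $\kappa\in\{4,16\}$) yields a nut graph with $(o_v,o_e)=(2,3)$ of order $n(m+t)$ for every $m,t$ whose factorizations~(\ref{eq:P1def}) satisfy (\ref{eq:P1val}) and (\ref{eq:P1(2,3)}), and the explicit orders $ns$ are then recorded in Table~\ref{table:tetra}.

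The construction of~$\Lambda_1$ is the crux, and it is exactly where the two-part hypothesis on~$n$ enters. If $n$ is not prime then, being odd and at least~$5$, it factors as $n=n_1n_2$ with $n_1,n_2\ge 3$ both odd (take $n_1$ to be the least prime divisor of~$n$, so that $n_2=n/n_1\ge n_1\ge 3$). I would set $\Lambda_1=C_{n_1}\times C_{n_2}$. This graph has order~$n$ and valence $2\cdot 2=4$; since odd cycles are connected and non-bipartite, the connectivity criterion for direct products makes the product connected; since cycles are arc-transitive, the first item of Proposition~\ref{prop:direct_product} makes~$\Lambda_1$ arc-transitive; and since an odd cycle is non-singular (its eigenvalues $2\cos(2\pi j/n)$ never vanish when $n$ is odd), the second item of Proposition~\ref{prop:direct_product} makes~$\Lambda_1$ non-singular.

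If instead $n$ is a prime with $\modulo{n}{1}{4}$, then $4\mid(n-1)$, and Proposition~\ref{prop:asdfghjkl} directly provides the non-singular arc-transitive connected circulant $\mathcal{C}_n^4$ of order~$n$ and valence~$4$, which I take as~$\Lambda_1$. These two cases exhaust the hypothesis ``$\modulo{n}{1}{4}$ or $n$ not prime'', so a suitable~$\Lambda_1$ exists for every odd $n\ge 5$ covered by the statement, and Proposition~\ref{prop:general} applies to give the general assertion.

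For the explicit list of orders I would build an analogue of Table~\ref{table:3n}, recording for each $s\in\{17,19,23,73,79,97,107,113\}$ a splitting $s=m+t$ with $m$ odd and $t$ even, factorizations~$[m]$ and~$[t]$ as in~(\ref{eq:fact}), a choice of~$\kappa\in\{4,16\}$, and the resulting valences $k_1,k_2$, and then verify (\ref{eq:P1val}) and (\ref{eq:P1(2,3)}) in one representative case, leaving the rest to routine checking. For instance, for $s=17$ one may take $m=1$ and $t=16$, with $\Lambda_4=\zanka$ and $\Lambda_5=16\,\zanka$ and $\kappa=16$; then $k_1=k_2=4$, so $k_1k_2=16=mt$ gives the valence condition~(\ref{eq:P1val}), while $t+k_1=20\ne 5=m+k_2$ gives~(\ref{eq:P1(2,3)}). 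The only genuine obstacle is the existence of~$\Lambda_1$: for a prime $n\equiv 3\pmod{4}$ neither route is available, since the circulant $\mathcal{C}_n^4$ would require $4\mid(n-1)$ and there is no factorization of~$n$ into two factors $\ge 3$, which is precisely why such~$n$ are excluded from the hypothesis.
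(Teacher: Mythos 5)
Your proposal is correct and follows essentially the same route as the paper: the same case split (composite $n$ handled via $\Lambda_1=C_{n_1}\times C_{n_2}$ using Proposition~\ref{prop:direct_product}, prime $n\equiv 1 \pmod 4$ handled via the circulant $\mathcal{C}_n^4$ from Proposition~\ref{prop:asdfghjkl}), followed by an appeal to Proposition~\ref{prop:general} and a table-based verification of the listed values of~$s$. Your sample check for $s=17$ even matches the paper's own entry in Table~\ref{table:tetra}, so nothing is missing.
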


\begin{proof}
Let $n$ be as in the corollary. We claim that there exists a non-singular arc-transitive connected tetravalent graph~$\Lambda_1$ of order~$n$.
If $n$ is prime, then $\modulo{n}{1}{4}$, and so Proposition~\ref{prop:asdfghjkl} applies.
Otherwise, $n = n_1n_2$ for some $n_1, n_2 \ge 3$. By the remarks preceding Proposition~\ref{prop:general}, we can then simply take $\Lambda_1=C_{n_1} \times C_{n_2}$ (recall that since $n_1$ and $n_2$ are odd, $\Lambda_1$ is connected). Consequently, our claim holds, and so we can apply Proposition~\ref{prop:general}. For the second part we can proceed as in the proof of Corollary~\ref{cor:3n} using Table~\ref{table:tetra}. 
\end{proof}

\begin{table}[h!]
\centering
\renewcommand{\arraystretch}{1.0}
\begin{adjustbox}{width=0.7\textwidth}
\begin{tabular}{C{1.5cm}|C{1cm}ccC{1cm}|C{2cm}|C{2cm}|cc}
\toprule
order & $s$ & $m$ & $t$ & $\kappa$ & $[m]$ & $[t]$ & $k_1$ & $k_2$ \\
\midrule
%$$ & $$ & $$ & $$ & $$ & $||$ & $||$ & $$ & $$ \\
$17n$ & $17$ & $1$ & $16$ & $16$ & $1||$ & $16||$ & $4$ & $4$ \\
$19n$ & $19$ & $3$ & $16$ & $16$ & $3||$ & $4||4$ & $4$ & $12$ \\
$23n$ & $23$ & $7$ & $16$ & $16$ & $7||$ & $2||8$ & $4$ & $28$ \\
$73n$ & $73$ & $9$ & $64$ & $16$ & $1||3,3$ & $4||4,4$ & $16$ & $36$ \\
$79n$ & $79$ & $15$ & $64$ & $16$ & $3||5$ & $4||16$ & $16$ & $60$ \\
$97n$ & $97$ & $1$ & $96$ & $16$ & $1||$ & $8||3,4$ & $4$ & $24$ \\
$107n$ & $107$ & $11$ & $96$ & $16$ & $1|11^{(2)}|$ & $2||4,12$ & $8$ & $132$ \\
$113n$ & $113$ & $49$ & $64$ & $16$ & {$1|7^{(2)},7^{(2)}|$} & $1||8,8$ & $16$ & $196$ \\
\bottomrule
\end{tabular}
\end{adjustbox}
\caption{Appropriate $m$ and $t$ with $[m]$, $[t]$, for all the $s$ from Corollary~\ref{cor:tetra}. }
\label{table:tetra}
\end{table}

\begin{figure}[h!]
\centering
\begin{tikzpicture}[scale=0.57, every node/.style={circle, fill=white, inner sep=0pt}]
	\def\n{13}
	\def\m{3}
	\def\t{9}
	\def\dist{4} %between Gamma1 and Gamma2
	\def\x{2.5}
    \def\y{1.3}
%Gamma1
%\node[circle, fill=black, inner sep=4.0pt] at (\x*2-\x,0){}; %% REFERENCE
    \foreach \i in {0,...,\numexpr\n-1} {
        \foreach \j in {0,...,\numexpr\m-1} {
            \node[circle, fill=black, inner sep=1.5pt] (v\i\j) at (\x*\j, -\i) {};
	  \node[anchor=west, yshift=-7pt] at (v\i\j){};%{$v_{\i}^{\j}$};
        }
    }
    \foreach \j in {0,...,\numexpr\m-1}{
        \node[fill=none, anchor=south, xshift=0pt, yshift=4pt] at (v0\j){\scriptsize $v_0^{\j}$};
        \node[anchor=north, xshift=0pt, yshift=-4pt] at (v12\j){\scriptsize $v_{12}^{\j}$};
    }

    \foreach \j in {0,...,\numexpr\m-1} {
        \foreach \i in {0,...,\numexpr\n-1} {
            \pgfmathtruncatemacro{\ip}{mod(\i+1, \n)} 
            \pgfmathtruncatemacro{\ipp}{mod(\i+5, \n)} 
                \draw[black!70] (v\i\j) -- (v\ip\j);
                \draw[black!70,bend right=25] (v\i\j) to (v\ipp\j);
        }
    }

    %edge from v_0j to v_12j
    \foreach \j in {0,...,\numexpr\m-1} {
        \draw[black!70, rounded corners, bend left=15](v0\j) to (\x/2.2+\x*\j,-\x*1.3);
        \draw[black!70, rounded corners, bend left=3](\x/2.2+\x*\j,-\x*1.3) to (\x/2.2+\x*\j,-12+\x*1.3); 
        \draw[black!70, rounded corners, bend left=15](\x/2.2+\x*\j,-12+\x*1.3) to (v12\j);
    }

    \foreach \i in {0,...,\numexpr\n-1} { %blocks of Gamma1
        \draw[rounded corners, thin, blue!60, dash pattern=on 5pt off 3pt, line width = 0.7pt] 
            (-1.1, -\i+0.4) rectangle (\x*\m-\x+1.6, -\i-0.4); 
        \node[anchor=east,text=blue!100] at (-0.5, -\i-0.15) {};
    }
    
%\node[fill=none] at (2, 1) {$\Gamma_1=3\Cay(13;\{\pm 1, \pm 5\})$};

%Gamma2
    \foreach \i in {0,...,\numexpr\n-1} {
        \foreach \j in {0,...,\numexpr\t-1} {
    \ifnum\j=4
        \node[circle, fill=none, inner sep=1.5pt] (u\i\j) at (1.5*\j*\y+\x*\m-\x+\dist, -\i) {$\ldots$};
    \else
        \node[circle, fill=black, inner sep=1.5pt] (u\i\j) at (1.5*\j*\y+\x*\m-\x+\dist, -\i) {};
	    \node[anchor=west, yshift=-7pt] at (u\i\j){};%{$u_{\i}^{\j}$};
    \fi
        }
    }
    \foreach \j in {0,1,2,3}{
        \node[fill=none, anchor=south, yshift=4pt] at (u0\j){\scriptsize $u_0^{\j}$};
        \node[anchor=north, yshift=-4pt] at (u12\j){\scriptsize $u_{12}^{\j}$};
    }
    \foreach \j in {5,6,7,8}{
        \pgfmathparse{int(\j+7)}
        \node[fill=none, anchor=south, yshift=4pt] at (u0\j){\scriptsize $u_0^{\pgfmathresult}$};
        \node[anchor=north, yshift=-4pt] at (u12\j) {\scriptsize $u_{12}^{\pgfmathresult}$};

    }

    \foreach \i in {0,...,\numexpr\n-1} {
        \draw[black!70, line width = 0pt] (u\i0) -- (u\i1);
        \draw[black!70, line width = 0pt] (u\i1) -- (u\i2);
        \draw[black!70, line width = 0pt] (u\i2) -- (u\i3);
        \draw[black!70, line width = 0pt, rounded corners, bend right=12] (u\i3) to (u\i0);
        \draw[black!70, line width = 0pt, rounded corners, bend right=18] (u\i0) to (u\i2);
        \draw[black!70, line width = 0pt, rounded corners, bend right=18] (u\i1) to (u\i3);
        \draw[black!70, line width = 0pt] (u\i5) -- (u\i6);
        \draw[black!70, line width = 0pt] (u\i6) -- (u\i7);
        \draw[black!70, line width = 0pt] (u\i7) -- (u\i8);
        \draw[black!70, line width = 0pt, rounded corners, bend right=12] (u\i8) to (u\i5);
        \draw[black!70, line width = 0pt, rounded corners, bend right=18] (u\i5) to (u\i7);
        \draw[black!70, line width = 0pt, rounded corners, bend right=18] (u\i6) to (u\i8);
    }
    
    \foreach \j in {0,...,\numexpr\t-1} {
        \ifnum \j=4
        \else
        \foreach \i in {0,...,\numexpr\n-1} {
            \pgfmathtruncatemacro{\ip}{mod(\i+1, \n)} 
            \pgfmathtruncatemacro{\ipp}{mod(\i+5, \n)} 
                \draw[black!70,line width=0pt] (u\i\j) -- (u\ip\j);
                \draw[black!70,line width=0pt,bend right=22] (u\i\j) to (u\ipp\j);
        }
        \fi
    }

    %edge from u_0j to u_12j
    \foreach \j in {0,...,\numexpr\t-1} {  
        \ifnum \j=4
        \else 
        \draw[black!70, line width = 0pt, rounded corners, bend left=15](u0\j) to (2*\m+\dist+\m/2*\j*\y,-\x*1.3);
        \draw[black!70, line width = 0pt, rounded corners, bend left=3](2*\m+\dist+\m/2*\j*\y,-\x*1.3) to (2*\m+\dist+\m/2*\j*\y,-12+\x*1.3); 
        \draw[black!70, line width = 0pt, rounded corners, bend left=15](2*\m+\dist+\m/2*\j*\y,-12+\x*1.3) to (u12\j);
        \fi
    }

    \foreach \i in {0,...,\numexpr\n-1} { %blocks of Gamma2
        \draw[rounded corners, thick, magenta!60, dash pattern=on 5pt off 3pt, line width = 0.7pt] 
            (2*\m+\dist-1.9, -\i+0.4) rectangle (2*\m+\dist+\m/2+\t+5.5, -\i-0.4);
        \node[anchor=west,text=magenta!100] at (\m+\dist+\t+1.4, -\i-0.15) {};
    }

%\node[fill=none] at (\m+\dist+\t-1, 1) {$\Gamma_2=13K_5$};

%edges between Gamma1 and Gamma2
    \foreach \i in {0,...,\numexpr\n-1} { %
        \node[fill=none] (v\i\m) at (\x*\m-\x+1.6,-\i) {};
        \node[fill=none] (v\i\m+\dist) at (2*\m+\dist-1.9,-\i) {};
            \draw[line width=4pt, black!50] (v\i\m) -- (v\i\m+\dist);
    }

\end{tikzpicture}
\caption{An example of a nut graph with $(o_v,o_e)=(2,3)$ from Construction~\ref{cons:merge} of order $19n$, for $n=13$ (see Table~\ref{table:tetra}). {It arises from Corollary~\ref{cor:tetra} (for $s=19$).}}
\label{pic:construction1} 
\end{figure}
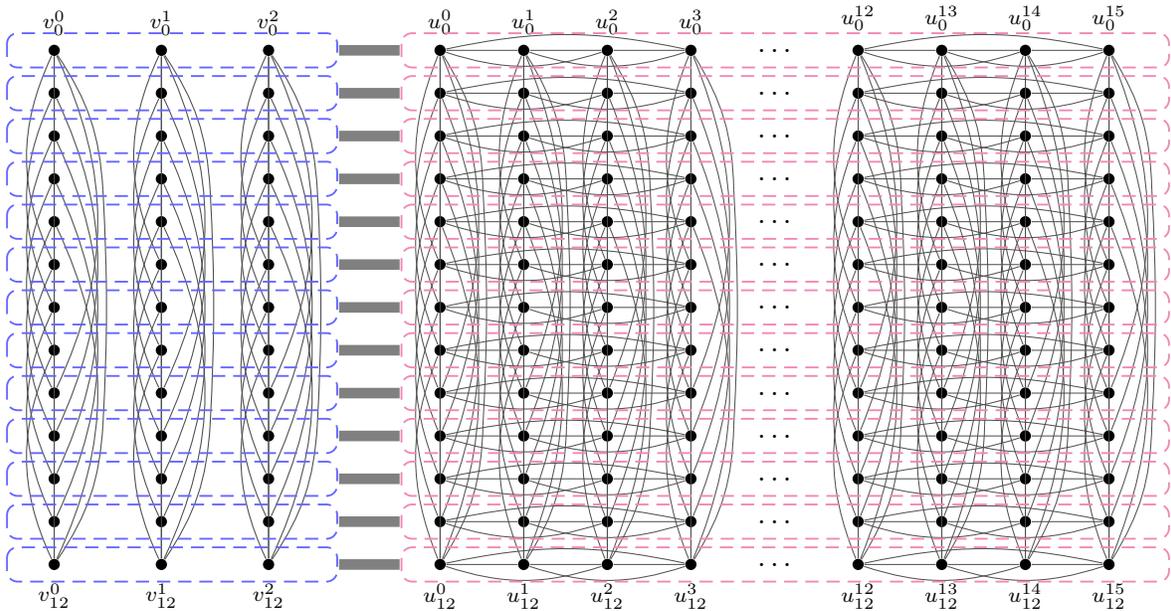

Up to~$2\,500$, there are~$883$ odd non-prime integers. As already mentioned, Corollary~\ref{cor:3n} and Corollary~\ref{cor:tetra} provide examples of nut graphs with $(o_v,o_e)=(2,3)$ for~$866$ of these orders. More specifically, Corollary~\ref{cor:3n} provides examples for~$847$ of them, while Corollary~\ref{cor:tetra} provides examples for~$19$ additional ones. We take care of the remaining~$17$ orders using the following corollary of Proposition~\ref{prop:asdfghjkl} and Proposition~\ref{prop:general}.

\begin{corollary}\label{cor:novi}
Let $n \ge 3$ be a prime. Let $m$ and~$t$ be integers as in Proposition~\ref{prop:general} such that (\ref{eq:P1val}) and (\ref{eq:P1(2,3)}) hold {for some $d_i$, $1 \le i \le r_1$, and $d_j'$, $1 \le j \le r_2$, and} for some even divisor $a$ of $n-1$ and some $\kappa \in \{a,a^2\}$. Then there exists a nut graph with $(o_v,o_e)=(2,3)$ of order $n(m+t)$. 
\end{corollary}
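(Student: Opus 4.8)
The plan is to deduce this directly from Proposition~\ref{prop:general}, whose hypothesis asks for a non-singular arc-transitive connected graph of odd order~$n \ge 3$ and valence~$a$. Since $n \ge 3$ is prime, it is in particular an odd prime, and by assumption $a$ is an even divisor of~$n-1$. First I would invoke Proposition~\ref{prop:asdfghjkl}, which for exactly this data---an odd prime~$n$ and an even divisor~$a$ of~$n-1$---produces a non-singular arc-transitive connected circulant $\mathcal{C}_n^a$ of order~$n$ and valence~$a$. This circulant is precisely the graph~$\Lambda_1$ demanded by Proposition~\ref{prop:general}.

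Having supplied~$\Lambda_1$, the remaining hypotheses of Proposition~\ref{prop:general} concern only $m$ and~$t$: that they admit factorizations of the form~(\ref{eq:P1def}) and that~(\ref{eq:P1val}) and~(\ref{eq:P1(2,3)}) hold for some even divisors $d_i \mid (m_i-1)$, $d_j' \mid (t_j-1)$, the chosen~$a$, and some $\kappa \in \{a, a^2\}$. These are exactly the assumptions placed on $m$ and~$t$ in the statement of the corollary. Thus Proposition~\ref{prop:general} applies verbatim and yields a nut graph with $(o_v, o_e) = (2,3)$ of order $n(m+t)$, as required.

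I do not expect a genuine obstacle here: the whole content of the corollary is the observation that, when $n$ is prime, Proposition~\ref{prop:asdfghjkl} furnishes a suitable base graph~$\Lambda_1$ of \emph{every} even valence dividing~$n-1$, so one is no longer confined to the valences $a=2$ (cycles, as in Corollary~\ref{cor:3n}) or $a=4$ (tetravalent graphs, as in Corollary~\ref{cor:tetra}). The proof is therefore a one-line combination of the two propositions; the only point worth verifying is that the phrases ``odd prime'' and ``even divisor of~$n-1$'' match the hypotheses of Proposition~\ref{prop:asdfghjkl} exactly, which they do since $n-1$ is even and hence admits even divisors.
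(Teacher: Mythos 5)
Your proposal is correct and follows exactly the paper's intended argument: the paper introduces Corollary~\ref{cor:novi} precisely as a corollary ``of Proposition~\ref{prop:asdfghjkl} and Proposition~\ref{prop:general}'', i.e., taking the circulant $\mathcal{C}_n^a$ from Proposition~\ref{prop:asdfghjkl} as the base graph $\Lambda_1$ and applying Proposition~\ref{prop:general} verbatim, just as you do. Your closing check that $n\ge 3$ prime is odd and that an even $a\mid(n-1)$ satisfies $a\ge 2$ covers the only hypotheses needing verification, so nothing is missing.
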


\begin{table}[h!]
\centering
\renewcommand{\arraystretch}{1.0}
\begin{adjustbox}{width=0.80\textwidth}
\begin{tabular}{C{1.5cm}|C{1cm}cC{1cm}|C{1cm}C{1cm}|C{2cm}|C{2cm}|cc}
\toprule
order & $n$ & $m$ & $t$ & $a$ & $\kappa$ & $[m]$ & $[t]$ & $k_1$ & $k_2$ \\
\midrule
%$$ & $$ & $$ & $$ & $$ & $||$ & $||$ & $$ & $$ \\
$361$ & $19$ & $1$ & $18$ & $18$ & $18$ & $1||$ & $9||2$ & $18$ & $1$ \\
$437$ & $19$ & $5$ & $18$ & $18$ & $18$ & $5||$ & $3||6$ & $18$ & $5$ \\
$529$ & $23$ & $1$ & $22$ & $22$ & $22$ & $1||$ & $11||2$ & $22$ & $1$ \\
$731$ & $43$ & $3$ & $14$ & $42$ & $42$ & $3||$ & $7||2$ & $42$ & $1$ \\
$817$ & $43$ & $7$ & $12$ & $42$ & $42$ & {$1|7^{(2)}|$} & $6||2$ & $84$ & $1$ \\
$989$ & $43$ & $21$ & $2$ & $42$ & $42$ & $21||$ & $1||2$ & $42$ & $1$ \\
$1\,139$ & $67$ & $11$ & $6$ & $66$ & $66$ & $11||$ & $3||2$ & $66$ & $1$ \\
$1\,207$ & $71$ & $7$ & $10$ & $70$ & $70$ & $7||$ & $5||2$ & $70$ & $1$ \\
{$1273$} & $19$ & $55$ & $12$ & $6$ & $6$ & $5||11$ & $1||12$ & $60$ & $11$ \\
$1\,349$ & $71$ & $5$ & $14$ & $70$ & $70$ & $5||$ & $7||2$ & $70$ & $1$ \\
$1\,501$ & $79$ & $13$ & $6$ & $78$ & $78$ & $13||$ & $3||2$ & $78$ & $1$ \\
$1\,541$ & $23$ & $55$ & $12$ & $22$ & $22$ & $5||11$ & $3||4$ & $220$ & $3$ \\
$1\,633$ & $23$ & $55$ & $16$ & $22$ & $22$ & $1||5,11$ & $8||2$ & $880$ & $1$ \\
$1\,817$ & $23$ & $55$ & $24$ & $22$ & $22$ & $11||5$ & $1||4,6$ & $88$ & $15$ \\
$1\,849$ & $43$ & $1$ & $42$ & $42$ & $42$ & $1||$ & $21||2$ & $42$ & $1$ \\
$2\,033$ & $19$ & $27$ & $80$ & $18$ & $18$ & $3||3,3$ & {$1|5^{(2)}|16$} & $72$ & $30$ \\
$2\,461$ & $23$ & $63$ & $44$ & $22$ & $22$ & $9||7$ & $2||22$ & $132$ & $21$ \\
\bottomrule
\end{tabular}
\end{adjustbox}
\caption{Appropriate $n$, {$a$ and $\kappa$, }and $m$, $t$ with $[m]$, $[t]$, for all the orders up to~$2\,500$ not covered by Corollaries~\ref{cor:3n} and~\ref{cor:tetra}.}
\label{table:others}
\end{table}

The $17$ odd non-prime orders up to~$2\,500$ which are not covered by Corollary~\ref{cor:3n} and Corollary~\ref{cor:tetra}, are the~$17$ integers from the first column of Table~\ref{table:others}. In this table, we provide appropriate integers $n$, $m$, $t$, $a$ and $\kappa$, together with suitable~$[m]$ and~$[t]$, satisfying the conditions of Corollary~\ref{cor:novi}. This finally confirms Conjecture~\ref{conjecture} up to order~$2\,500$.

\begin{comment}
<n(m+t),n,m, t, m+t, x, y>
<361, 19, 1, 18, 19, 1, 1>,
<437, 19, 5, 18, 23, 1, 5>, 
<529, 23, 1, 22, 23, 22, 1>, 
<731, 43, 3, 14, 17, 1, 1>,
<817, 43, 7, 12, 19, 2, 1>, 
<989, 43, 21, 2, 23, 1, 1>, 
<1139, 67, 11, 6, 17, 1, 1>, 
<1207, 71, 7, 10, 17, 1, 1>, 
<1349, 71, 5, 14, 19, 1, 1>, 
<1501, 79, 13, 6, 19, 1, 1>, 
<1541, 23, 55, 12, 67, 10, 3>, 
<1633, 23, 55, 16, 71, 40, 1>, 
<1817, 23, 55, 24, 79, 4, 15>, 
<1849, 43, 1, 42, 43, 42, 1>, 
<2033, 19, 27, 80, 107, 4, 30>,  
<2461, 23, 63, 44, 107, 6, 21>, 
\end{comment}

We also mention that for each $n \ge 3$, a nut graph with $(o_v,o_e) = (2,3)$ of order $n^2$ (see orders $361$, $529$, and $1\,849$ in {Table~\ref{table:others},} or the graph on the right-hand side of Figure~\ref{pic:3n}) can be constructed as a graph from Proposition~\ref{prop:general} by taking $a=\kappa = n - 1$, and $m = 1$,  $t = n - 1$, with $[m] = 1||$, and $[t] = \frac{n - 1}{2}||2$. We point out again that the fact that for each odd~$n\ge3$ a nut graph with $(o_v,o_e)=(2,3)$ of order~$n^2$ exists was shown already in~\cite{BFP24}.

\section{{Computational data and further research}}\label{sec:5}

As already mentioned, Corollaries \ref{cor:3n}, \ref{cor:tetra} and \ref{cor:novi} provide examples of nut graphs with $(o_v, o_e) = (2,3)$ for all odd non-prime orders up to~$2\,500$. The first order that is not taken care of by these corollaries is~$2\,839 = 17\cdot 167$. To see what happens with larger orders, we performed extensive computations for all odd non-prime orders up to~$1\,000\,000$. Some obtained data is gathered in Table~\ref{table:data}, where for each of the chosen~$18$ bounds~$N$, $X$ denotes the set of all odd non-prime orders up to~$N$, $X_1$ denotes the set of orders in~$X$, remaining after applying Corollary~\ref{cor:3n}, $X_2$ denotes the set of orders in~$X_1$, remaining after applying Corollary~\ref{cor:tetra}, and $X_3$ denotes the set of orders in~$X_2$, remaining after applying Corollary~\ref{cor:novi}.

\begin{table}[h!]
\centering
\renewcommand{\arraystretch}{1.0}
\begin{adjustbox}{width=0.77\textwidth}
\begin{tabular}{cccccccc}
\toprule
$N$ & $|X|$ & $|X_1|$ & $|X_1|/|X|$ & $|X_2|$ & $|X_2|/|X|$ & $|X_3|$ & $|X_3|/|X|$ \\
\midrule
%$$ & $$ & $$ & $$ & $$ & $||$ & $||$ & $$ & $$ \\
$1\,000 $&$ 332 $&$ 9 $&$ 0.02710 $&$ 6 $&$ 0.01807 $&$ 0 $&$ 0$ \\
$2\,500 $&$ 883 $&$ 36 $&$ 0.04077 $&$ 17 $&$ 0.01925 $&$ 0 $&$ 0$ \\
$5\,000 $&$ 1\,831 $&$ 80 $&$ 0.04369 $&$ 39 $&$ 0.02129 $&$ 2 $&$ 0.00109$ \\
$10\,000 $&$ 3\,771 $&$ 172 $&$ 0.04561 $&$ 86 $&$ 0.02280 $&$ 6 $&$ 0.00159$ \\
$20\,000 $&$ 7\,738 $&$ 344 $&$ 0.04445 $&$ 182 $&$ 0.02352 $&$ 10 $&$ 0.00129$ \\
$30\,000 $&$ 11\,755 $&$ 535 $&$ 0.04551 $&$ 290 $&$ 0.02467 $&$ 15 $&$ 0.00127$ \\
$40\,000 $&$ 15\,797 $&$ 722 $&$ 0.04570 $&$ 391 $&$ 0.02475 $&$ 19 $&$ 0.00120$ \\
$50\,000 $&$ 19\,867 $&$ 901 $&$ 0.04535 $&$ 490 $&$ 0.02466 $&$ 22 $&$ 0.00110$ \\
$100\,000 $&$ 40\,408 $&$ 1\,813 $&$ 0.04486 $&$ 991 $&$ 0.02452 $&$ 46 $&$ 0.00113$ \\
$200\,000 $&$ 82\,016 $&$ 3\,561 $&$ 0.04341 $&$ 1\,991 $&$ 0.02427 $&$ 85 $&$ 0.00103$ \\
$300\,000 $&$ 124\,003 $&$ 5\,337 $&$ 0.04303 $&$ 2\,973 $&$ 0.02397 $&$ 132 $&$ 0.00106$ \\
$400\,000 $&$ 166\,140 $&$ 7\,074 $&$ 0.04257 $&$ 3\,934 $&$ 0.02367 $&$ 180 $&$ 0.00108$ \\
$500\,000 $&$ 208\,462 $&$ 8\,745 $&$ 0.04195 $&$ 4\,881 $&$ 0.02341 $&$ 227 $&$ 0.00108$ \\
$600\,000 $&$ 250\,902 $&$ 10\,435 $&$ 0.04158 $&$ 5\,810 $&$ 0.02315 $&$ 267 $&$ 0.00106$ \\
$700\,000 $&$ 293\,457 $&$ 12\,111 $&$ 0.04127 $&$ 6\,760 $&$ 0.02303 $&$ 317 $&$ 0.00108$ \\
$800\,000 $&$ 336\,049 $&$ 13\,834 $&$ 0.04116 $&$ 7\,728 $&$ 0.02299 $&$ 364 $&$ 0.00108$ \\
$900\,000 $&$ 378\,726 $&$ 15\,513 $&$ 0.04096 $&$ 8\,701 $&$ 0.02297 $&$ 417 $&$ 0.00110$ \\
$1\,000\,000 $&$ 421\,502 $&$ 17\,254 $&$ 0.04093 $&$ 9\,718 $&$ 0.02305 $&$ 457 $&$ 0.00108$ \\
\bottomrule
\end{tabular}
\end{adjustbox}
\caption{Computational data up to order a million.}
\label{table:data}
\end{table}

We find that among the $421\,502$ odd non-prime orders up to~$1\,000\,000$, only $457$ of them remain without an example of a nut graph with $(o_v,o_e)=(2,3)$ after applying Corollaries \ref{cor:3n}, \ref{cor:tetra} and \ref{cor:novi}. This is less than $0.11$~percent. In Table~\ref{table:remaining} we state the first~$66$ remaining orders (which are precisely the remaining orders up to~$150\,000$).

\begin{table}[h!]
\centering
\renewcommand{\arraystretch}{1.0}
\begin{adjustbox}{width=1\textwidth}
\begin{tabular}{ccccccccccc}
\toprule
$2\,839$ & $4\,313$ & $5\,377$ & $6\,103$ & $9\,101$ & $9\,557$ & $11\,153$ & $11\,761$ & $12\,631$ & $16\,711$ & $23\,237$ \\
$24\,289$ & $25\,483$ & $27\,319$ & $28\,339$ & $30\,379$ & $34\,459$ & $37\,519$ & $38\,413$ & $42\,617$ & $45\,581$ & $45\,679$ \\ 
$53\,821$ & $57\,103$ & $58\,939$ & $59\,953$ & $59\,959$ & $60\,163$ & $62\,203$ & $62\,809$ & $64\,963$ & $66\,691$ & $69\,521$ \\ 
$69\,689$ & $71\,977$ & $72\,257$ & $77\,299$ & $79\,741$ & $79\,993$ & $81\,377$ & $81\,493$ & $84\,001$ & $86\,683$ & $92\,263$ \\ 
$94\,901$ & $98\,029$ & $105\,181$ & $108\,733$ & $110\,333$ & $112\,183$ & $114\,181$ & $114\,223$ & $114\,817$ & $116\,059$ & $119\,467$ \\ 
$120\,751$ & $127\,891$ & $129\,091$ & $129\,931$ & $133\,249$ & $137\,281$ & $137\,789$ & $139\,651$ & $140\,513$ & $143\,119$ & $145\,217$ \\
\bottomrule
\end{tabular}
\end{adjustbox}
\caption{The remaining orders up to~$150\,000$ after applying Corollaries~\ref{cor:3n}, \ref{cor:tetra} and~\ref{cor:novi}.}
\label{table:remaining}
\end{table}
%{ 2839, 4313, 5377, 6103, 9101, 9557, 11153, 11761, 12631, 16711, 23237, 24289, 25483, 27319, 28339, 30379, 34459, 37519, 38413, 42617, 45581, 45679, 53821, 57103, 58939, 59953, 59959, 60163, 62203, 62809, 64963, 66691, 69521, 69689, 71977, 72257, 77299, 79741, 79993, 81377, 81493, 84001, 86683, 92263, 94901, 98029, 105181, 108733, 110333, 112183, 114181, 114223, 114817, 116059, 119467, 120751, 127891, 129091, 129931, 133249, 137281, 137789, 139651, 140513, 143119, 145217 }

In addition, Table~\ref{table:data} suggests that Corollary~\ref{cor:3n} alone substantially reduces the number of odd non-prime orders, eliminating over $95$~percent of them. When Corollary~\ref{cor:tetra} and Corollary~\ref{cor:novi} are applied, the proportion of remaining orders seems to be close to $0.11$~percent. Moreover, it seems that this trend goes on also when considering bounds~$N$ exceeding one million.

It is interesting to note (but not very surprising given the nature of Corollaries~\ref{cor:3n}, \ref{cor:tetra} and~\ref{cor:novi}), that each of the $457$ orders up to~$1\,000\,000$, which is not covered by the three corollaries, is a product of two distinct primes. It thus seems that in order to completely resolve Conjecture~\ref{conjecture}, one of the major steps is to consider graphs of order a product of two primes.

We conclude the paper by the following remarks on the possibilities to get new examples of nut graphs with $(o_v,o_e)=(2,3)$. First, note that Proposition~\ref{prop:general} offers more possibilities than what was used in its three corollaries. Namely, it allows any non-singular arc-transitive connected graph to be used as~$\Lambda_1$ from the proof of the proposition. Second, Proposition~\ref{prop:general} could be generalized in the sense that we allow more general types of non-singular arc-transitive graphs for~$\Lambda_4$ and~$\Lambda_5$. Next, Theorem~\ref{theorem:K} offers the possibility that the orbital~$\Delta_2$ is non-diagonal and has nothing to do with~$\Delta_1$, while Construction~\ref{cons:merge} is not restricted to the case that~$\Delta_3=\mathcal{I}$. 
Finally, there also exist nut graphs with $(o_v,o_e)=(2,3)$ that do not arise from Construction~\ref{cons:merge}. In fact, noteworthy examples can be found already by looking at the smallest order not covered by~\cite{BFP24}, namely~$35$. 

In Figures~\ref{pic:Petersen} and~\ref{pic:Heawood}, three such examples are drawn. With reference to notation from Lemma~\ref{lemma:1}, in each of them the vertices and edges of~$\Gamma_1$ are drawn in teal color, the vertices of~$\Gamma_2$ and the edges between $\Gamma_1$ and~$\Gamma_2$ are drawn in black color, while the edges of~$\Gamma_2$ are drawn in gray color.

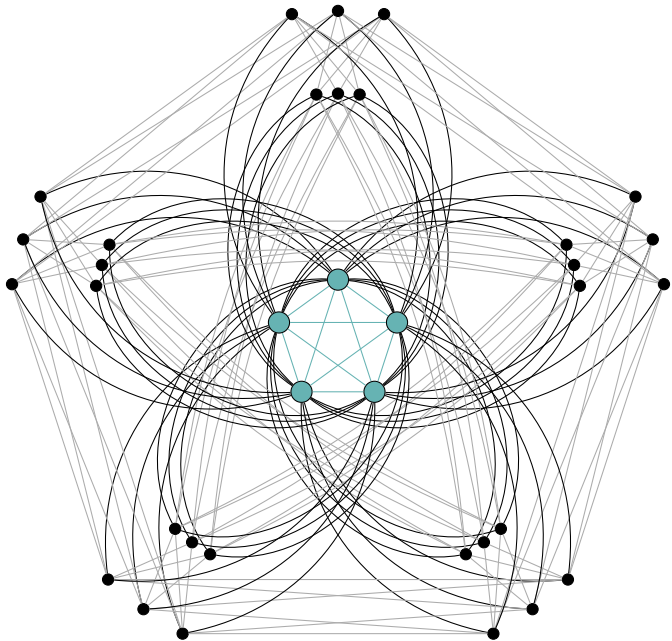
\begin{figure}[h]
\centering
\begin{tikzpicture}[scale=0.55, rotate=90]
% Radii of the circles
\def\rzero{1.5cm}  % Main circle
\def\rone{6cm}   % Inner circle (below)
\def\rtwo{8cm}   % Outer circle (above)

\def\n{5}

\def\colorA{gray!20}
\def\colorA{teal!60}
\def\colorB{black!100}
\def\colorC{black!100}
\def\colorD{black!100}
\def\colorE{blue!70}

\foreach \i in {0,...,\numexpr\n-1} {
    \def\angle{360/\n * \i}

    % The main vertex with selected color (main circle)
    \node[draw, circle, fill=\colorA, minimum size=8pt, inner sep=2pt] (\i) at ({\angle}:\rzero) {};

    % Two vertices below (inner circle)
    \node[draw, circle, fill=\colorB, minimum size=4pt, inner sep=1.5pt] (b\i c) at ({\angle}:\rone) {};
    \node[draw, circle, fill=\colorC, minimum size=4pt, inner sep=1.5pt] (b\i l) at ({\angle - 5}:\rone) {};
    \node[draw, circle, fill=\colorD, minimum size=4pt, inner sep=1.5pt] (b\i r) at ({\angle + 5}:\rone) {};

    % Two vertices above (outer circle)
    \node[draw, circle, fill=\colorB, minimum size=4pt, inner sep=1.5pt] (a\i c) at ({\angle}:\rtwo) {};
    \node[draw, circle, fill=\colorC, minimum size=4pt, inner sep=1.5pt] (a\i l) at ({\angle - 8}:\rtwo) {};
    \node[draw, circle, fill=\colorD, minimum size=4pt, inner sep=1.5pt] (a\i r) at ({\angle + 8}:\rtwo) {};
}

% Connect the edges within the left graph
\foreach \i in {0,...,\numexpr \n-1} {
    \foreach \j in {0,...,\numexpr \n-1} {
        \ifnum\i<\j
        \draw[\colorA] (\i) to (\j);
        \fi
    }
}
\foreach \i in {0,...,\numexpr \n-1} {
    % Compute indices with wrap-around for circular structure
    \pgfmathsetmacro\ipone{int(mod(\i+1, \n))}   % i+1
    \pgfmathsetmacro\iptwo{int(mod(\i+2, \n))}   % i+2
    \pgfmathsetmacro\imone{int(mod(\i-1+\n, \n))} % i-1
    \pgfmathsetmacro\imtwo{int(mod(\i-2+\n, \n))} % i-2

    % Connect the left graph with the right graph
    \foreach \j in {c,l,r} {
        \draw[bend right=40] (\i) to (a\ipone \j);
        \draw[bend left=40] (\i) to (a\imone \j);
        \draw[bend right=60] (\i) to (b\iptwo \j);
        \draw[bend left=60] (\i) to (b\imtwo \j);
        }
    % Connect the vertices within the right graph
    \foreach \j in {c,l,r} {
        \foreach \k in {c,l,r} {
            \ifx\j\k
            \else
                \draw[gray!65] (a\i \j) to (a\ipone \k); 
                \draw[gray!65, bend right=10] (b\i \j) to (b\iptwo \k);
                \draw[gray!65] (a\i \j) to (b\i \k);
            \fi
        }
    }
}

\end{tikzpicture}

\caption{An example of a nut graph with parameter $6$-tuple $\langle 5,4,12,30,6,2 \rangle$.}
\label{pic:Petersen} 

\end{figure}

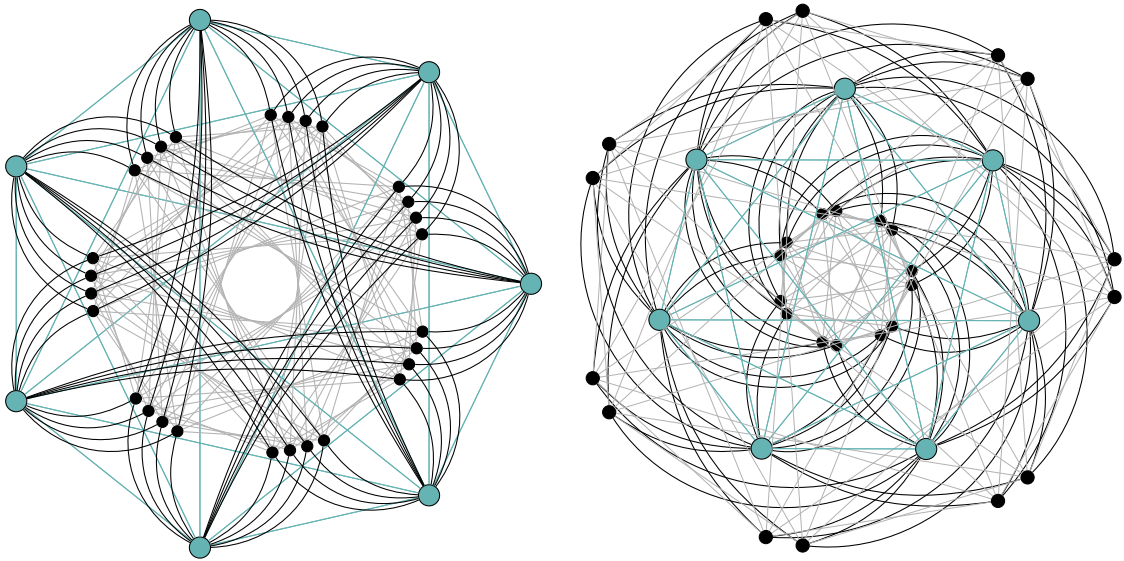
\begin{figure}[h!]
\centering
\begin{tikzpicture}[scale=0.45]
% Radii of the circles
\def\rzero{8cm}  % Main circle
\def\rone{2cm}   % Inner circle (below)
\def\rtwo{5cm}   % Outer circle (above)

\def\n{7}

\def\colorA{gray!20}
\def\colorA{teal!60}
\def\colorB{black!100}
\def\colorC{black!100}
\def\colorD{black!100}
\def\colorE{black!100}

\foreach \i in {0,...,6} {
    \def\angle{360/\n * \i}

    % The main vertex with selected color (main circle)
    \node[draw, circle, fill=\colorA, minimum size=8pt, inner sep=2pt] (\i) at ({\angle}:\rzero) {};

    % Four vertices above (outer circle)
    \node[draw, circle, fill=\colorD, minimum size=4pt, inner sep=1.5pt] (a\i) at ({\angle + 17}:\rtwo) {};
    \node[draw, circle, fill=\colorE, minimum size=4pt, inner sep=1.5pt] (b\i) at ({\angle + 23}:\rtwo) {};    
    \node[draw, circle, fill=\colorD, minimum size=4pt, inner sep=1.5pt] (c\i) at ({\angle + 29}:\rtwo) {};
    \node[draw, circle, fill=\colorE, minimum size=4pt, inner sep=1.5pt] (d\i) at ({\angle + 35}:\rtwo) {};
}

% Add edges between circles
\foreach \i in {0,...,6} {    
    \foreach \j in {0,...,6} {
        \ifnum\i<\j   
            \draw[\colorA, bend right=0] (\i) to (\j);  
            \draw[\colorA, bend right=0] (\i) to (\j); 
            \draw[gray!60, bend right=0] (a\i) to (a\j);    
            \draw[gray!60, bend right=0] (b\i) to (b\j);   
            \draw[gray!60, bend right=0] (c\i) to (c\j);  
            \draw[gray!60, bend right=0] (d\i) to (d\j);   
        \fi
    }
}

% Add edges from colorA to black vertices (Heawood)
\foreach \i in {0,...,6} { 
    \pgfmathsetmacro\imone{int(mod(\i+\n-1, \n))}   % i+1
    \pgfmathsetmacro\ipfive{int(mod(\i+2, \n))}   % i+5 

    \draw[bend left=40] (\i) to (a\imone);
    \draw[bend left=35] (\i) to (b\imone);
    \draw[bend left=30] (\i) to (c\imone);
    \draw[bend left=25] (\i) to (d\imone);
    \draw[bend right=25] (\i) to (a\i);
    \draw[bend right=30] (\i) to (b\i);
    \draw[bend right=35] (\i) to (c\i);
    \draw[bend right=40] (\i) to (d\i);
    \draw[bend left=10] (\i) to (a\ipfive);
    \draw[bend left=10] (\i) to (b\ipfive);
    \draw[bend left=10] (\i) to (c\ipfive);
    \draw[bend left=10] (\i) to (d\ipfive);

}

\end{tikzpicture}
\begin{tikzpicture}[scale=0.45]
% Radii of the circles
\def\rzero{5.6cm}  % Main circle
\def\rone{2cm}   % Inner circle (below)
\def\rtwo{8cm}   % Outer circle (above)

\def\n{7}

\def\colorA{gray!20}
\def\colorA{teal!60}
\def\colorB{black!100}
\def\colorC{black!100}
\def\colorD{black!100}
\def\colorE{black!100}

\foreach \i in {0,...,6} {
    \def\angle{360/\n * \i}

    % The main vertex with selected color (main circle)
    \node[draw, circle, fill=\colorA, minimum size=8pt, inner sep=2pt] (\i) at ({\angle-13}:\rzero) {}; %\angle-13

    % Two vertices below (inner circle)
    \node[draw, circle, fill=\colorB, minimum size=4pt, inner sep=1.5pt] (b\i l) at ({\angle - 6}:\rone) {};
    \node[draw, circle, fill=\colorC, minimum size=4pt, inner sep=1.5pt] (b\i r) at ({\angle + 6}:\rone) {};

    % Two vertices above (outer circle)
    \node[draw, circle, fill=\colorD, minimum size=5pt, inner sep=1.5pt] (a\i l) at ({\angle - 4}:\rtwo) {};
    \node[draw, circle, fill=\colorE, minimum size=5pt, inner sep=1.5pt] (a\i r) at ({\angle + 4}:\rtwo) {};
}

% Add edges between circles
\foreach \i in {0,...,6} {
    % Compute indices with wrap-around for circular structure
    \pgfmathsetmacro\ipone{int(mod(\i+1, \n))}   % i+1
    \pgfmathsetmacro\iptwo{int(mod(\i+2, \n))}   % i+2
    \pgfmathsetmacro\imone{int(mod(\i-1+\n, \n))} % i-1
    \pgfmathsetmacro\imtwo{int(mod(\i-2+\n, \n))} % i-2

    % Connect the main circle with the outer circle
    \draw[bend right=20] (\i) to (a\ipone l);
    \draw[bend right=22] (\i) to (a\ipone r);
    \draw[bend left=22] (\i) to (a\imone l);
    \draw[bend left=20] (\i) to (a\imone r);
    \draw[bend left=42] (\i) to (a\imtwo l);
    \draw[bend left=50] (\i) to (a\imtwo r);

    % Connect the main circle with the inner circle
    \draw[bend right=20] (\i) to (b\ipone l);
    \draw[bend right=25] (\i) to (b\ipone r);
    \draw[bend right=45] (\i) to (b\iptwo l);
    \draw[bend right=60] (\i) to (b\iptwo r);
    \draw[bend left=30] (\i) to (b\imone l);
    \draw[bend left=20] (\i) to (b\imone r);
}

 %Add edges inside the inner/outer circle (gray)
\foreach \i in {0,...,6} {
    \foreach \j in {0,...,6} {
        \ifnum\i<\j
            \draw[gray!60] (b\i l) -- (b\j l);
            \draw[gray!60] (b\i r) -- (b\j r);
            \draw[gray!60, line width=0.2pt] (a\i l) -- (a\j l);
            \draw[gray!60, line width=0.2pt] (a\i r) -- (a\j r);
            \draw[\colorA, line width=0.2pt] (\i) -- (\j);
            \draw[\colorA, line width=0.2pt] (\i) -- (\j);
        \fi
    }
}

\end{tikzpicture}
\caption{Two examples of nut graphs with parameter $6$-tuple $\langle 7,6,12,28,6,3 \rangle$.}
\label{pic:Heawood} 

\end{figure}

For the graph~$\Gamma$ on Figure~\ref{pic:Petersen}, $\Gamma_1 \cong K_5$ and $\Gamma_2 \cong \mathrm{GP}(5,2) \times K_3$, where $\mathrm{GP}(5,2)$ is the Petersen graph. Using a computer, one can check that~$\Gamma$ is a nut graph with $(o_v,o_e)=(2,3)$ and parameter $6$-tuple $\langle 5,4,12,30,6,2 \rangle$. We claim that this graph does not arise from Construction~\ref{cons:merge}. Suppose to the contrary, that there exist integers $n,m,t$, groups $G,H,K$ and orbitals $\Delta_i$, $i \in \{1,2,3,4,5\}$ as in Construction~\ref{cons:merge}, such that $\Gamma = \GG(\Delta_1,\Delta_2,\Delta_3,\Delta_4,\Delta_5)$. Since $n_1=5$, it clearly follows that~$n=5$, $m=1$ and $t=6$. Next, since $k_1=4$, $\mathcal{O}(\Delta_1)=K_5$, and so $G$ has only two orbitals in its action on~$\ZZ_5$, namely~$\Delta_1$ and~$\mathcal{I}$. Therefore, $12 = d_1 = |\Delta_3(0)|\cdot t \in \{1\cdot 6, 4\cdot 6\}$, a contradiction. Hence, $\Gamma$ does not arise from Construction~\ref{cons:merge}, as claimed.
For the two graphs on Figure~$\ref{pic:Heawood}$, $\Gamma_1 \cong K_7$ and $\Gamma_2 \cong 4K_7$. A computer check reveals that both graphs are nut graphs with $(o_v,o_e)=(2,3)$ and parameter $6$-tuple $\langle 7,6,12,28,6,3 \rangle$. That none of them arises from Construction~\ref{cons:merge}, can be shown in a similar way as for the graph from Figure~\ref{pic:Petersen}. 

Finally, the automorphism groups of the three graphs in Figures~\ref{pic:Petersen} and~\ref{pic:Heawood} are also interesting. The automorphism group of the graph on Figure~\ref{pic:Petersen} is isomorphic to the semidirect product $A_5 \rtimes D_6 = \mathrm{PSL}(2,5) \rtimes D_6$. Moreover, the automorphism group of the graph on the left-hand side of Figure~\ref{pic:Heawood} is isomorphic to the direct product $\mathrm{PSL}(2,7) \times S_4$. Note that contracting each group of $4$ black vertices to a single vertex and deleting all gray and teal edges results in the well-known Heawood graph. The presence of the simple group $\mathrm{PSL}(2,7)$ in the automorphism group is thus not that surprising. In contrast, the automorphism group of the graph on the right-hand side of Figure~\ref{pic:Heawood} is solvable. In fact, it is isomorphic to a semidirect product $(\ZZ_{2} \times \ZZ_{14}) \rtimes \ZZ_6$. But what is fascinating about it, is that while the automorphism group of each of the first two examples acts arc-transitively on each of the subgraphs~$\GG_1$ and~$\GG_2$, the automorphism group of the last one acts half-arc-transitively on $\GG_2$ (that is, it acts transitively on the set of its vertices and edges, but not on the set of its arcs).

This shows that there are still many fascinating aspects of nut graphs with a high degree of symmetry, and in particular the ones corresponding to Conjecture~\ref{conjecture}, that could be investigated in the future.

\section*{Acknowledgments and Declarations}
{\bf Funding:} \\
The authors acknowledge the financial support by the Slovenian Research and Innovation Agency (Young researchers program, research program P1-0285, and research projects J1-3001 and J1-50000).
\medskip

\noindent
{\bf Competing interests:} \\
The authors declare that they have no relevant competing financial interests.

\printbibliography

\end{document}